\theoremstyle{plain} 
\newtheorem{lem}{Lemma}
\newtheorem{thm}[lem]{Theorem}
\newtheorem{prop}[lem]{Proposition}
\newtheorem{cor}[lem]{Corollary}
\newcommand{\black}{\color{black}}
\definecolor{mm}{rgb}{0,.6,0.4}
\theoremstyle{definition}
\theoremstyle{remark}
\newtheorem{alg}{Algorithm}
\newcommand{\bP}{\mathbf P}
\newcommand{\bE}{\mathbf E}
\newcommand{\llb}{\llbracket}
\newcommand{\rrb}{\rrbracket}
\newcommand{\ep}{\epsilon}
\newcommand{\Sk}{\operatorname{Sk}}
\newcommand{\Lf}{\operatorname{Lf}}
\newcommand{\Br}{\operatorname{Br}}
\newcommand{\Span}{\operatorname{Span}}
\newcommand{\cut}{\operatorname{cut}}
\newcommand{\shuff}{\operatorname{shuff}}
\newcommand{\SSub}{\operatorname{S}}
\newcommand{\bv}{\mathbf v}
\newcommand{\bx}{\mathbf x}
\newcommand{\bm}{\mathbf m}
\newcommand{\bcT}{\boldsymbol{\mathcal T}}
\newcommand{\bU}{\mathbf U}
\newcommand{\bV}{\mathbf V}
\newcommand{\bX}{\mathbf X}
\newcommand{\cF}{\mathcal F}
\newcommand{\cG}{\mathcal G}
\newcommand{\cH}{\mathcal H}
\newcommand{\cI}{\mathcal I}
\newcommand{\cM}{\mathcal M}
\newcommand{\cN}{\mathcal N}
\newcommand{\cP}{\mathcal P}
\newcommand{\cT}{\mathcal T}
\newcommand{\cS}{\mathcal S}
\newcommand{\rR}{\mathrm R}
\newcommand{\m}{\mathrm m}
\newcommand{\D}{\mathrm{Decomp}}
\newcommand{\sI}{\mathscr I}
\newcommand{\bbR}{\mathbb R}
\newcommand{\bbT}{\mathbb T}
\newcommand{\bbP}{\mathbb P}
\newcommand{\bbE}{\mathbb E}
\newcommand{\bbM}{\mathbb M}
\newcommand{\bbN}{\mathbb N}
\newcommand{\N}{\mathbb N}
\newcommand{\eqd}{\stackrel{d}=}
\newcommand{\Ec}[1]{{\mathbb E}[#1]}
\newcommand{\br}{\mathrm{br}}
\newcommand{\Rt}{\mathrm{root}}
\title{\bf Reversing the cut tree of the\\ Brownian continuum random tree}
\author{Nicolas Broutin
\thanks{INRIA de Paris, 2 rue Simone Iff, CS 42112, 75589 Paris Cedex 12 - France.
Email: nicolas.broutin@inria.fr} 
\and Minmin Wang
\thanks{Universit\'e Pierre et Marie Curie, 4 place Jussieu, 75005 Paris - France.
Email: wangminmin03@gmail.com }
}
\date{\today}
\begin{document}

\maketitle

\begin{abstract}
Consider the Aldous--Pitman fragmentation process \cite{aldcoal} of a Brownian continuum random tree $\cT^{\mathrm{br}}$. The associated cut tree $\cut(\cT^{\mathrm{br}})$, introduced by Bertoin and Miermont \cite{Bert12}, is defined in a measurable way from the fragmentation process, describing the genealogy of the fragmentation, and is itself distributed as a Brownian CRT. In this work, we introduce a \emph{shuffle} transform, which can be considered as the reverse of the map taking $\cT^{\br}$ to $\cut(\cT^{\br})$.

\smallskip

\noindent 
{\bf AMS 2010 subject classifications}: Primary 60J80, 60C05. Secondary 60G18, 60F15.

\smallskip

\noindent   
{\bf Keywords}: {\it Brownian continuum random tree, Aldous--Pitman fragmentation, cut tree, random cutting of random trees.}
\end{abstract}
{\small

%\tableofcontents
}
\section{Introduction}\label{sec:intro}

\subsection{Motivation and literature}
Let $\mathcal T^{\br}$ be Aldous' Brownian continuum random tree (CRT). We consider the fragmentation process introduced by Aldous \& Pitman \cite{aldcoal}: informally, the process describes the time evolution of the masses of the connected components of a forest $\cF_t$, $t\ge 0$, where $\cF_t$ results from a logging of $\cT^{\br}$ with cuts falling uniformly per unit of time and length in $\cT^{\br}$. 
The Aldous--Pitman fragmentation  is an instance of a self-similar fragmentation such as studied in Bertoin's book \cite{bertoinfrag}. 
There is a natural genealogical structure associated with the fragmentation process, and it is as a representation of this genealogy that Bertoin and Miermont  \cite{Bert12} constructed the so-called cut tree of $\cT^{\br}$, hereafter denoted by $\cut(\cT^{\br})$. A rather remarkable fact is that $\cut(\cT^{\br})$ is itself also distributed as the Brownian CRT. In this work, we are interested in defining the reverse of the map $\cT^{\br}\mapsto \cut(\cT^{\br})$. This has been motivated by a seemingly natural question: given the cut tree, can one reconstruct the initial tree? We will see that the cut tree does not contain all the information necessary for such a reconstruction; this observation leads us then to introduce a reverse transform. 
However, giving a proper meaning to the reverse transform
requires some explanation, which we postpone to Sections~\ref{sec: intro_discret} and \ref{sec: intro_plan}. For the time being, we provide some background on cut trees, which can be traced back to some work in combinatorics dating from the seventies. 

\paragraph{Random cutting of trees. }
The idea of cut trees is closely related to random cutting of trees, a subject initiated by Meir and Moon \cite{meir69} and that has since then been largely studied. The initial question concerns discrete trees, and we present here 
a version of the random cutting problem where cuts happen at nodes (one can also define a version where cuts happen at edges): take a rooted tree (random or not) on a finite vertex set; at each step, sample a node uniformly at random and remove it along with all the edges adjacent to it (the removed node is then referred to as a \emph{cut}); this disconnects the tree into connected components (maybe one, if we picked a leaf, for instance); discard the components that are now disconnected from the root; keep going until the root is finally picked. The main questions addressed by Meir and Moon and many subsequent researchers concern mostly the number of cuts that are needed for the process to terminate. This problem has been considered for a number of classical models of deterministic and random trees, including random binary search trees \cite{Holmgren2010a,Holmgren2011c}, random recursive trees \cite{IkMo2007,DrIkMoRo2009,
Bertoin2013b,BaBe2014a} and Galton--Watson trees conditioned on the total progeny 
\cite{Janson06,ABH10,FiKaPa2006,Panholzer2006, Bert11, Bert12}. 

The CRT being the scaling limits of Galton--Watson trees with finite-variance offspring distribution
\cite{aldcrt3}, the case of Galton--Watson trees is the most related to our matters, and we now focus on that 
case: let $T_n$ be a Galton--Watson tree conditioned to have $n$ nodes, and whose offspring distribution has variance $\sigma^2<\infty$; denote by $N_n$ the number of cuts until the root $V$ is picked in the above process. Then Janson \cite{Janson06} showed by moment calculations that, as $n\to\infty$, 
$N_n/(\sigma\sqrt{n})$ converges in distribution to the Rayleigh distribution. Incidentally, the Rayleigh distribution is also the limit law of $(\sigma H_n/\sqrt{n})_{ n\ge 1}$, where $H_n$ denotes the height (distance to the root) of a randomly picked node in $T_n$. Thus, Janson's result can be rephrased as follows: the limit distribution of $(N_n/(\sigma\sqrt{n}))_{n\ge 1}$ coincides with that of $(\sigma H_n/\sqrt{n})_{n\ge 1}$. 
It turns out that an even stronger statement holds true   
in the case that $T_n$ is a uniform labelled tree of $n$ nodes (this is equivalent to take the offspring distribution to be  Poisson(1), and is sometimes referred to as the Cayley tree): we have that $N_n$ and $H_n+1$ actually have exactly the same distribution. This result is due to Addario-Berry, Broutin and Holmgren \cite{ABH10}, and relies on the following bijective method: one can construct another tree $\cut(T_n; V)$ (on the same vertex set as $T_n$) which encodes the isolation of $V$ by the successive cuts (see Section \ref{sec: intro_discret} for the details) such that (1) the node $V$ lies at distance $N_n-1$ from the root, and (2) $\cut(T_n; V)$ has the same distribution as $T_n$, while (3) $V$ is a uniform node in $\cut(T_n; V)$. The above distributional identity then follows.  We call $\cut(T_n; V)$ the $1$-partial cut tree, since 
it keeps track of the way one node (here $V$) was isolated. More generally, Addario-Berry, Broutin and Holmgren \cite{ABH10} have considered cutting procedures resulting in the isolation of $k$ nodes and introduced the corresponding $k$-partial cut trees. For these cutting procedures, one only discards the portions of the tree that do not contain any of the $k$ marked nodes to be isolated. Moreover, by first taking a uniform permutation of the vertex set, we can define simultaneously all the $k$-isolation processes, so that letting $k\to \infty$ we obtain the (complete) cut tree $\cut(T_n)$ of $T_n$, whose graph distance encodes the number of cuts required to isolate every single one of the $n$ nodes. In this case, since all the nodes are marked, no portion of the tree is ever discarded, and the tree 
$\cut(T_n)$ actually encodes the genealogy of a discrete fragmentation of the tree
%in which at each time step a random node is sampled (%It is easily seen that proceeding simultaneously in all the connected components, or 
%sampling one node at a time does not influence the structure of cuts.
(we refer to Section \ref{sec: intro_discret} for details). A similar notion appears in Bertoin \cite{Bert11} and Bertoin \& Miermont \cite{Bert12}, where they define a (different) cut tree $\widehat{\cut}(T_n)$ for $T_n$ 
directly as the genealogy tree of the discrete fragmentation process induced by the cutting of $T_n$.

\paragraph{Random cutting of continuum trees and fragmentation processes. }
More recently, such cutting processes have been considered for the Brownian CRT \cite{ABH10, AbDe11, Bert12, Bert11}. The cutting on the Brownian CRT is of course closely related to the Aldous--Pitman fragmentation mentioned in the first paragraph. 
Moreover, Bertoin and Miermont \cite{Bert12} proved that if $T_n$ is a Galton--Watson tree with a finite variance offspring distribution and conditioned to have $n$ nodes, then the pair $(T_n, \widehat{\cut}(T_n))$, after suitable scaling in the graph distance, converges in distribution in the sense of Gromov--Prokhorov, to a pair $(\cT^{\br}, \cut(\cT^{\br}))$ of continuum random trees; furthermore the tree $\cut(\cT^\br)$ can be defined directly from the fragmentation process of $\cT$ and indeed encodes its genealogy. A similar result holds for $(T_n, \cut(T_n))$ in the case where $T_n$ is a uniform Cayley tree; see \cite{BW}.

Let us also mention that cut trees have been introduced for other models of continuum random trees, including L\'evy trees under excursion measures \cite{AbDe12}, stable trees conditioned on the total masses \cite{Dieuleveut2013a}, and inhomogeneous continuum random trees \cite{BW}.

\subsection{Reversing the cut trees of Cayley trees}
\label{sec: intro_discret}

Although our main concern is the case of the continuum tree, we think it will be helpful to explain here the question we address and our approach to its solution in the setting of discrete trees. The case of Cayley 
trees, for which the question has been studied in \cite{ABH10} for partial reversals and then in \cite{BW} for the complete reversal, is especially adapted to our presentation since many of the correspondences are then exact. We refer to these two papers for proofs and further details. 

Throughout this part, let $\bbT_n$ denote the set of rooted labelled trees on the vertex set $[n]\!:=\{1, 2, \dots, n\}$ and let $T\in \bbT_n$. For $u, v\in [n]$, we write $\llb u, v\rrb$ for the set of vertices that lie on the 
shortest path joining $u$ to $v$ in $T$. 
Let $X(1), X(2), \dots, X(n)$ be a uniform permutation of the vertex set. We will use the sequence $(X(i))_{1\le i\le n}$ to define various isolation processes on $T$. 

\begin{figure}[tbp]
    \centering
    \includegraphics[width=0.85\textwidth]{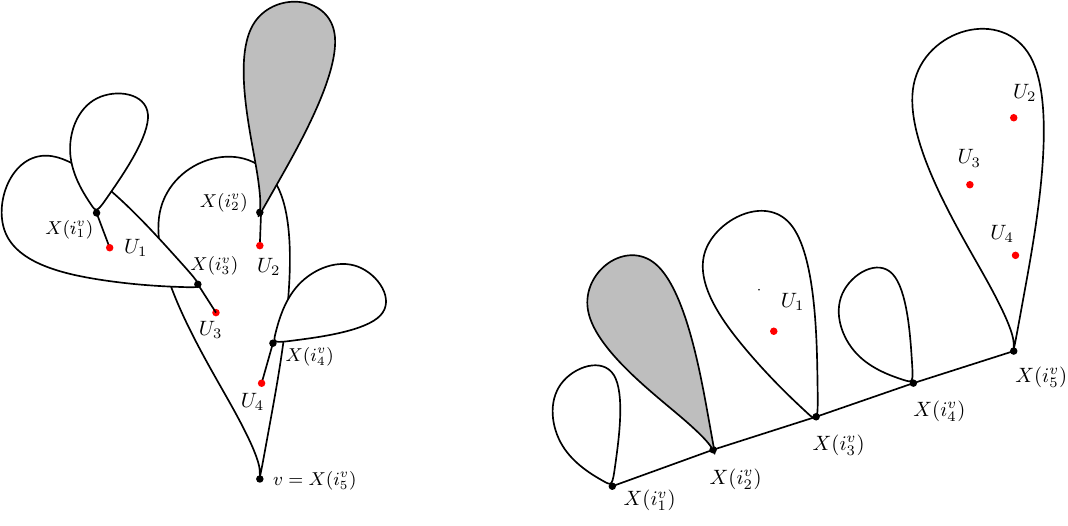}
    \caption{An example of one-node isolation and the associated $1$-partial cut tree. On the left, the initial tree $T$ and the sequence of cuts $X(i^v_1), X(i^v_2), \dots, X(i^v_5)$, which are responsible for  the isolation of $v=X(i^v_5)$. On the right, the corresponding cut tree. Note that due to the cut at $X(i^v_m)$, a small subtree of $T$ is discarded; it then appears in $\cut(T; v)$ as the subtree above the path $\llb X(i^v_1), X(i^v_5)\rrb$.   }
    \label{fig: 1-cut}
\end{figure}

\paragraph{One-node isolation and the $1$-partial cut tree. }
Let $v$ be any node of $T$ and consider the following isolation process of $v$. 
Let $F^v_0=T$ and for $1\le m\le n$, let $F^v_m$ be the connected component of $T\setminus\{X(1), \dots, X(m)\}$ containing $v$, with the convention that $F^v_{m'}=\varnothing$ for all $m'\ge m$ if $X(m)=v$. We say that $X(m)$ is a \emph{cut} in this process if $X(m)\in F^v_{m-1}$. Namely, the cuts are those elements of $[n]$ whose removal have reduced the size of the current connected component of $v$. Let $\mathbf X(v):=\{X(i_m^v): 1\le m\le N(v), i_1^v <i_2^v<\cdots< i^v_{N(v)}\}$ be the sequence formed by the successive cuts. Observe in particular that $X(i^v_{N(v)})=v$. For $1\le m\le N(v)-1$, we let $U_m$ be the unique neighbor of $X(i_m^v)$ which belongs to $\llb X(i_m^v), v\rrb$. The following algorithm returns a tree denoted by $G(T, v, \mathbf X(v))$ as a function of $T, v$ and $\mathbf X(v)$. 

\begin{alg}\label{alg: 1-cut}
{\bf Construction of the $1$-partial cut tree.}  Apply the following transformations to the tree $T$:
\begin{compactenum}[--]
\item
for $1\le m\le N(v)-1$, remove the edge $\{X(i^v_m), U_m\}$;
\item
for $1\le m\le N(v)-1$, add the edge $\{X(i_m^v), X(i_{m+1}^v)\}$;
\item
declare $X(i_1^v)$ as the root. 
\end{compactenum}
\end{alg}
Denote by $\cut(T; v)=G(T, v, \mathbf X(v))$ the graph thereby obtained (see also Figure \ref{fig: 1-cut}). Then we have $G(T, v, \mathbf X(v))\in \bbT_n$ and it contains a path consisting of the sequence $\bX(v)$. Moreover, if we remove all the edges in  that path, then each $X(i^v_k)$ remains connected to the subgraph $F_{i^v_k-1}\setminus F_{i^v_k}$, namely the part discarded at step $k$ because of the cut at $X(i^v_k)$.   

\paragraph{$k$-node isolation and the $k$-partial cut tree. } 
The above isolation process can be generalized to the case of multiple nodes. Let $\bv=\{v_1, v_2, \dots, v_k\}$ be $k\ge 1$ vertices of $T$ (not necessarily distinct). For each $v_j$ and $m\ge 0$, let $F^{v_j}_m$ denote the connected component of $T\setminus\{X(i): i\le m\}$ containing $v_j$; then the sequence of cuts responsible for the isolation of $v_j$, namely, those $X_m$ satisfying $X_m\in F^{v_j}_{m-1}$, is denoted by $\{X(i^{v_j}_m): 1\le m\le N(v_j), i^{v_j}_1<i^{v_j}_2<\cdots<i^{v_j}_{N(v_j)}\}$. 

To define the associated partial cut tree, we adopt a recursive approach. For $2\le j\le k$, let $m_j=\max\{m: v_j\in \cup_{1\le j'\le j-1}F^{v_{j'}}_m\}$. Then set $\mathbf X(v_1)=\{X(i^{v_1}_m): 1\le m\le N(v_1)\}$ and for $2\le j\le k$, set $\mathbf X(v_j)=\{X(i^{v_j}_m): i^{v_j}_m> m_j, m\le N(v_j)\}$. 

\begin{alg}\label{alg: k-cut}
{\bf Construction of the $k$-partial cut tree.}   Set $G_1=G(T, v_1, \mathbf X(v_1))$, and for $2\le j\le k$, do the following: if $\mathbf X(v_j)=\varnothing$, set $G_j=G_{j-1}$; otherwise, 
\begin{compactenum}[--]
\item
locate in $G_{j-1}$ the connected component of $G_{j-1}\setminus(\mathbf X(v_1)\cup\cdots\cup\mathbf X(v_{j-1}))$ which contains $v_j$, denote it by $T_j$; let $w_j$ be the node of $\mathbf X(v_1) \cup \cdots \cup \mathbf X(v_{j-1})$ that is closest to $v_j$ in $G_{j-1}$;
\item
replace in $G_{j-1}$ the subgraph $T_j$ by $G(T_j, v_j, \mathbf X(v_j))$: {\black to do so, remove the edge between $w_j$ and $T_j$ and add the edge $\{w_j, X(m_j+1)\}$ }; let $G_j$ be the graph obtained, still rooted at $X(i_1^{v_1})$.
\end{compactenum}
\end{alg}
Denote by $\cut(T; \mathbf v)=G_k$ the graph thereby obtained. Then $\cut(T; \mathbf v)\in \bbT_n$ and the 
subset of nodes $\bX(v_1) \cup \cdots \cup \bX(v_k)$ forms a subtree that contains the root $X(i_1^{v_1})$ (See Figure~\ref{fig:rec_partial_cut}).

\begin{figure}[tbp]
    \centering
    \includegraphics[width=0.9\textwidth]{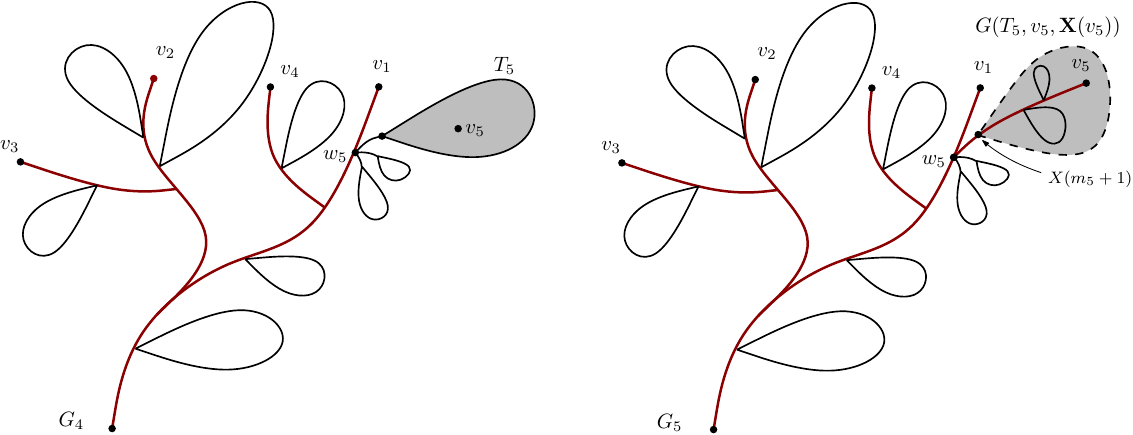}
    \caption{The recursive construction of partial cut trees. To obtain $G_5$ from $G_4$, we simply replace the subtree $T_5$ (the grey part on the left) by the $1$-partial cut tree $G(T_5, v_5, \bX(v_5))$ (the grey one on the right). The red paths on the left consist of $\bX(v_1)\cup\cdots\cup\bX(v_4)$; the ones on the right consist of  $\bX(v_1)\cup\cdots\cup\bX(v_5)$. }
    \label{fig:rec_partial_cut}
\end{figure}

\paragraph{The complete cut tree. }
Let $(V_i)_{i\ge 1}$ be a sequence of independent and uniform nodes of $T$ and write $\mathbf V_k=\{V_1, \dots, V_k\}$, $k\ge 1$. Observe that almost surely the sequence $\{\cut(T; \mathbf V_k): k\ge 1\}$ becomes stationary after some $k$. Denote by $\cut(T)$ the limit of this sequence; it has the following remarkable properties. First, for each vertex $v$, the path in $\cut(T)$ leading to $v$ consists of  precisely the cuts responsible for the isolation of $v$. Note there is at most one tree in $\bbT_n$ satisfying this property. We conclude that $\cut(T)$ does not depend on $(V_i)_{i\ge 1}$.  
Second, $\cut(T)$ is uniformly distributed in $\bbT_n$. %Third, $\cut(T)$ does not depend on the sequence $(V_i)_{i\ge 1}$,{\red since $\{\cut(T, \mathbf V_k): k\ge 1\}$ is eventually stationary.

Note that we cannot recover $T$ from $\cut(T)$. To explain this, let us first introduce the following notation. 
For $T\in \bbT_n$ and two vertices $u\ne v$, let $\SSub(T, u\,|\,v)$ denote the 
connected component of $T\setminus\{u\}$ which contains $v$. 
%\fnic{I have a problem with this notation, since the roles of $u$ and $v$ appear rather symmetric \comm{Is $S_u(T, v)$ better?} Using subscript is not really better since the names for the nodes are actually huge in general. What about $\SSub(T,u\,|\,v)$ ?.}
If $X(i^{_{V_j}}_m)$ is a cut in the isolation of $V_j$ as defined above, denote by $U^{j}_m$ the neighbor of $X(i^{_{V_j}}_m)$ that belongs to $\llb V_j, X(i^{_{V_j}}_m)\rrb$. Then one can show that $U^{j}_m$ is uniformly random in $\SSub(\cut(T), X(i^{_{V_j}}_m)\,|\,V_j)$ (see also Figure \ref{fig: 1-cut}). 
In particular, this means that the information concerning the whereabouts of $U^j_m$ is partially lost in $\cut(T)$; therefore we cannot know the initial tree just from its cut tree. 
On the other hand, we know the distribution of $(U^j_m)$ conditional on $\cut(T)$. 
Relying on this, we can ``resample'' the lost information, namely, take a random collection $(\hat U^j_m)$ 
according to the distribution of $(U^j_m)$ conditional on $\cut(T)$; we then 
``reconstruct'' $T$ from $\cut(T)$ by assuming $(\hat U^j_m)$ are the actual $(U^j_m)$. Of course, we will not obtain from this procedure the actual $T$ with probability $1$, but instead a random tree which has 
the distribution of $T$ given $\cut(T)$. 
This is the basic idea of our reverse transform for the mapping $T\mapsto \cut(T)$. The following paragraphs explain how to proceed in the case of Cayley trees. 

\begin{figure}[htb]
    \centering
    \includegraphics[scale=.9]{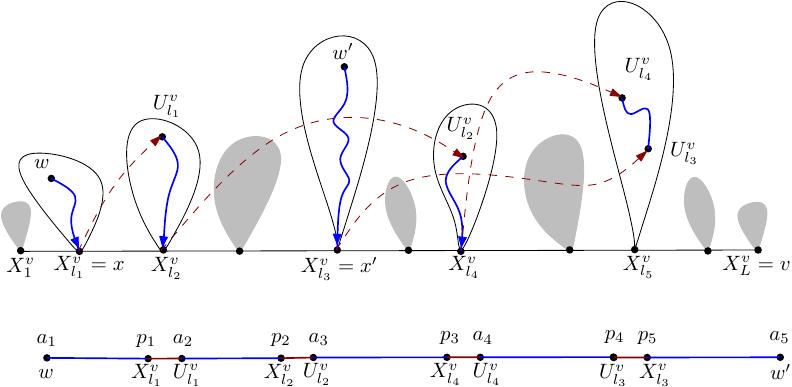}
    \caption{\label{fig:path} \black
    The $1$-partial shuffle transform as viewed from two points $w, w'$. In the upper line is illustrated the tree $T$; the straight line represents the path $\{X^v_l: 1\le l\le L\}$ joining the root to $v$.
Removing the edges $\{X^v_{l}, X^v_{l+1}\}, 1\le l\le L-1$, $T$ falls into $L$ subtrees (blocks in white and in gray in above).   
For $l<L$, the point $U^v_l$ is located in one of these subtrees right to $X^v_l$. By adding an edge between $X^v_l$ and $U^v_l$ (represented by the red arrows in above), we shuffle these subtrees  and make them into a new graph $H$. \\
Observe that the path of $H$ joining $w$ to $w'$ only crosses a sub-collection of these subtrees. In the example above, they correspond to the white subtrees.  
In the line below is depicted this path. 
Observe that it consists of $m$ blue segments and $m-1$ red segments, where $m$ is the number of subtrees it crosses ($m=5$ in the example above). 
Each of the red segments has length one; they are the edges added between $X^v_l$ and $U^v_l$. Each of the blue segments was contained in a white subtree; so that its length is the same in $H$ as in $T$. This explains Equation \eqref{eq: 1-rev}. The pair $(a_i, p_i)$ consists of the endpoints of the $i$-th blue segment. Their relative positions (which one is on the left) are in fact unimportant. Here, we have followed a choice convenient for generalization. In this example, their respective values in $(X^v_l)_{1\le l\le L}\cup\bU(v)\cup\{w, w'\}$ are given by the labels below. }
\end{figure}

\paragraph{Reversing the $1$-partial cut tree transform. } 
Let $v$ be a vertex of $T$. Suppose that $X^v_1, \dots, X^v_{L}=v$ is the sequence of vertices along the path of $T$ from the root to $v$. For $1\le l\le L-1$, sample a random vertex $U^v_i$ uniformly in $\SSub(T, X^v_l \,|\,v)$. Write $\bU(v)=\{U^v_i: 1\le l\le L-1\}$. 
Sample a uniform vertex $\rho_1$ of $T$. The following algorithm returns a tree $H(T, v, \bU(v), \rho_1)$ as a function of $T$, $v$, $\bU(v)$ and $\rho_1$. 

\begin{alg}\label{alg: 1-rev}
{\bf $1$-partial shuffle transform.}  Take $T$ and do the following: 
\begin{compactenum}[--]
\item
for $1\le l\le L-1$, remove the edge $\{X^v_{l}, X^v_{l+1}\}$;
\item
for $1\le l\le L-1$, add the edge $\{X^v_{l}, U^v_{l}\}$; 
\item
declare $\rho_1$ as the root. 
\end{compactenum}
\end{alg}

Denote by $H=H(T, v, \bU(v), \rho_1)$ the graph thereby obtained, which turns out to be an element of $\bbT_n$. Observe that the above algorithm is the exact reverse of Algorithm \ref{alg: 1-cut}. Let us also make the following observation, which is useful for the generalization to continuum trees. Let $w, w'$ be two distinct vertices of $T$ and let $x$ (resp.~$x'$) be the vertex among $\{X^v_l: 1\le l\le L\}$ which is closest to $w$ (resp.~$w'$). To simplify the discussion, suppose that $x, x', v$ are all distinct; the path in $H$ joining $w$ to $w'$ contains a sub-collection of $(X^v_l)_{1\le l\le L}\cup\bU(v)\cup\{w, w'\}$; we then match the elements of this sub-collection into pairs: $\{(a_i, p_i): 1\le i\le m\}$, so that for each $i$, $a_i$ and $p_i$ are in the same connected component after removing the edges $\{X^v_{l}, X^v_{l+1}\}, 1\le l\le L-1$. 
See Fig.~\ref{fig:path} for an example. If we write respectively $d$ for the graph distance in $T$ and $d_{H}$ for that in $H$, then we have (see also Fig.~\ref{fig:path})
\begin{equation}\label{eq: 1-rev}
d_{H}(w, w')=\sum_{1\le i\le m} d(a_i, p_i)+m-1.
\end{equation}

\paragraph{Reversing the cut tree transform. } 
The above partial shuffle transform can be extended to several vertices. To explain this, let $(V_i)_{i\ge 1}$ be a sequence of independent uniform vertices of $T$ and write $\mathbf V_k=\{V_1, \dots, V_k\}$, $k\ge 1$. 
Let $r$ denote the root of $T$.
Let $\Span(T, \mathbf V_k)$ be the smallest connected subgraph of $T$ containing $\mathbf V_k \cup \{r\}$. 
Let $T_1=T$ and for $k\ge 2$, if $V_k\in \Span(T, \mathbf V_{k-1})$, set $T_k=\bU_k=\rho_k=\varnothing$; otherwise let $T_k$ be the connected component of $T\setminus \Span(T, \mathbf V_{k-1})$ which contains $V_k$.
For each $k\ge 1$ and $T_k\ne\varnothing$, sample a uniform vertex $\rho_k$ in $T_k$;
 let $\{X^{k}_1, \dots, X^k_{L_k}\}=\llb r, V_k\rrb \cap T_k$, with the $X^k_i$ ordered by decreasing 
 distances to $V_k$; then, for $1\le l\le L_k-1$, sample a uniform vertex $U^{k}_l$ among $\SSub(T, X^k_l \,|\,V_k)$; let $\bU_k=\{U^k_l: 1\le l\le L_k-1\}$. Then, the following algorithm returns a tree $H(T,  \mathbf V_k, (\mathbf U_j)_{1\le j\le k}, (\rho_j)_{1\le j\le k})$ as a function of $T, \mathbf V_k, (\mathbf U_j)_{1\le j\le k}$ and $(\rho_j)_{1\le j\le k}$. 
 
\begin{alg}\label{alg: k-rev}
{\bf $k$-partial shuffle transform.}   Take $T$ and do the following: 
\begin{compactenum}[--]
\item
Remove the edges of $\Span(T, \mathbf V_k)$. 
\item
For $1\le j\le k$ and $T_k\ne\varnothing$, do the following:
\footnote{The formulation here is slightly different from the one in \cite{BW}, which is stated as follows: replace each edge $(x, w)$ of $\Span(T, \mathbf V_k)$ by $(x, U_w)$, where $x$ is the one closer to the root than $w$ and $U_w$ is a uniform node sampled  in the subtree of $T$ above $(x, w)$, and then root the obtained tree at $\rho_1$. 
It is not difficult to see that this gives the same transform as Algorithm \ref{alg: k-rev}. }
\begin{compactenum}[--]
\item
For $1\le l\le L_j-1$, add an edge $\{X^j_l, U^j_l\}$ for $1\le l\le L_j-1$;
\item
add an edge $\{\rho_k, w\}$, where $w$ is the vertex of $\Span(T, \mathbf V_k)$ closest to $T_k$;\footnote{Observe that this step amounts to rooting the subgraph replacing $T_k$ at $\rho_k$.} 
\end{compactenum}
\item
Declare $\rho_1$ as the root. 
\end{compactenum}
\end{alg}
Denote by $H_k=H(T, \mathbf V_k, (\bU_j)_{1\le j\le k}, (\rho_j)_{1\le j\le k})$ the graph produced, which is an element of $\bbT_n$.  Alternatively, $H_k$ can be obtained in the following recursive way, which can be seen as the dual of Algorithm \ref{alg: k-cut}. (See also  Figure~\ref{fig:rec_partial_cut}, from right to left.)
 Let $k\ge 2$. 
\begin{itemize}
\item
If $T_k=\varnothing$, then $H_k=H_{k-1}$.  
\item 
Otherwise, replace in $T$ the subgraph $T_k$ by $H(T_k, \mathbf U_k, \rho_k)$ and denote by $R_k$ the resulting graph. Then we have $H_k=H(R_k, \mathbf V_{k-1}, (\mathbf U_j)_{1\le j\le k-1}, (\rho_1)_{1\le j\le k-1})$. 
\end{itemize}

Note that we have $\Span(R_k, \mathbf V_{k-1})=\Span(T, \mathbf V_{k-1})$ (with obvious notation); therefore Algorithm~\ref{alg: k-rev} can still be applied to define  $H(R_k, \mathbf V_{k-1}, (\mathbf U_j)_{1\le j\le k-1}, (\rho_1)_{1\le j\le k-1})$. 
It is not difficult to see that the sequence $(H_k)_{k\ge 1}$ is eventually stationary, and denote by $\shuff(T)$ its limit. If $T$ is distributed uniformly in $\bbT_n$, then we have 
\[
\big(T, \shuff(T)\big) \eqd \big(\cut(T), T\big).
\]
Thanks to this identity in distribution, we can consider the shuffle transform as the reverse of the transform $T\mapsto \cut(T)$. 
Let us also keep in mind that in the definition of $\shuff(T)$, we have sampled the following random variables for each $k\ge 1$:     
i) a uniform vertex $\rho_k$ in $T_k$ which ``serves'' as the initial root of the subtree containing $V_k$; ii) for each vertex $X^k_l$ on the path of $T_k$ from its root to $V_k$, a uniform vertex $U^k_l$ in $\SSub(T, X^k_l \,|\, V_k)$ which ``serves'' as a neighbor of $X^k_l$ in the initial tree.

\subsection{An overview of the paper}
\label{sec: intro_plan}

The aim of the paper is to introduce the \emph{shuffle transform} for real trees, and more specifically for the Brownian continuum random tree: for a rooted real tree $(\cT, d, \rho)$ equipped with a finite measure $\mu$, we define a random symmetric matrix $\Gamma_\infty=(\gamma_\infty(i, j))_{1\le i, j<\infty}$ whose entries take values in $\bbR_+\cup\{\infty\}$, such that when $(\cT, d, \rho)$ is distributed according to the distribution of the Brownian CRT, which we denote by $\bP$, almost surely $\Gamma_\infty$ is well defined and characterizes a random measured and  rooted real tree $\shuff(\cT)=(\cH, d_{\cH}, \mu_{\cH}, \rho_{\cH})$; moreover, the law under $\bP$ of the pair $(\cH, \cT)$ seen as measured real trees is the same as that of $(\cT, \cut(\cT))$ under $\bP$. 

This shuffle transform can be viewed as an extension to the Brownian CRT of the construction in Section \ref{sec: intro_discret} for Cayley trees. 
As we have already mentioned, for a discrete tree $T$, the associated cut tree does not contain all the information necessary to reconstruct $T$. This is also true for continuum trees. Therefore, in defining the shuffle transform, we begin by sampling a collection of random points of $\cT$ referred to as the \emph{marks} which replace the lost information in the cut tree. We then construct a real tree 
which corresponds to the initial tree,  if the ``resampled'' information corresponds to the actual one. 
Recall that for discrete trees, we have used an approximation procedure: we first consider the reversals of the partial cut trees and then define the shuffle transform as the limit of  the partial reversals. The advantage of this approach lies in that the partial cut trees retain some unmodified portions of the initial tree, therefore their reversals are easier to handle.  This is even more important in the continuum tree setting.  In that case, the reconstruction consists in roughly three steps:
\begin{itemize}[--]
\item
Define the $1$-partial shuffle transform for the CRT. This has been done in \cite{BW}. 
Let us briefly explain the idea there. Algorithm \ref{alg: 1-rev} does not generalize directly to the CRT, but Equation \eqref{eq: 1-rev} does. Indeed, if $\eta, \eta'$ are two independent uniform points of the Brownian CRT, then during the one-node isolation process, there is only a finite number of cuts falling on the geodesic of the CRT which joins $\eta$ to $\eta'$ (see Lemma \ref{lem: key} below for a precise statement). This suggests that the number of summands in \eqref{eq: 1-rev} remains bounded for the CRTs and we can ``recover'' the distance between $\eta$ and $\eta'$ by a generalization of \eqref{eq: 1-rev}.
\item
Define the $k$-partial shuffle transform for the CRT, for $k\ge 2$. We use a recursive procedure,  similar to the one for Cayley trees. In particular, the recursive construction provides a natural coupling between the different partial reversals, which is convenient for the proof of convergence in the next step. 
\item
The convergence of $k$-partial shuffle transforms as $k\to \infty$. Contrary to the case of discrete trees, for CRTs, this convergence is non trivial, and a significant part of the paper is devoted to its proof. Note that our proof relies crucially on some specific properties of the Brownian CRT, especially the scaling property. 
\end{itemize}

\medskip

The rest of the paper is organized as follows. In Section \ref{sec: notation}, we introduce the necessary notation and recall from \cite{Bert12} the definition of the cut tree of the Brownian CRT. We also collect some results from \cite{BW} that are useful later on. In Section \ref{sec: shuff}, we give the formal definition of  the shuffle transform, which is defined as the limit of partial reversal transforms and state our main result (Theorem \ref{thm: cv_gamma}).  The proof for the convergence of the partial reversals is found in Section \ref{sec: proof}.

\section{Notation and preliminaries}
\label{sec: notation}

\subsection{Notation and background on continuum random trees}
We only give here a short overview, the interested reader may consult \cite{aldcrt3}, \cite{Legall2005}
or \cite{evans05} for more details. 

\paragraph{Measured metric spaces. }
A \emph{pointed measured metric space} is a quadruple $(X, d, \mu, \rho)$ where $(X, d)$ is a compact metric space 
equipped with a \emph{finite} Borel measure $\mu$, and $\rho$ is a distinguished point that is usually referred to 
as being the root. 
Two pointed measured metric spaces $(X, d, \mu,\rho)$ and $(X', d', \mu',\rho')$ are  \emph{equivalent} 
if there exists an isometry $f: (X, d)\to (X', d')$ satisfying $\mu'=f_\ast\mu$ and $f(\rho)=\rho'$. Let 
$\mathbb M$ denote the set of equivalence classes of pointed measured metric spaces. Then 
$\mathbb M$ is a Polish space when endowed with the pointed Gromov--Hausdorff--Prokhorov topology (\cite{evans05,miermont09}). 
%\fnic{I would go for GP only: the Gromov's reconstruction relies on the fact that $\mu$ has 
%full support, and if we use GHP, then we need to discuss the space of mm space for which the measure has 
%full support, and this will dilute the focus. + if the Polish character is only used to deal with the point measures in the decompositions, while Gromov's reconstruction is not formally used, then maybe it is worth mentioning somewhere. (but I don't say that here is the best place; must be considered.)
%\comm{I'm afraid I have to argue this. Even if we use GP, we need to suppose $\mu$ to have full support; otherwise there will be a problem if the root is not in the support (similarly, each time we take points in $X$). The polish character is needed to make sense of ``random measured metric space'', but yes, we also need it to talk of point measures on $\bbM$. {\nic OK}}}

The following functional defined on $\bbM$ is useful in our treatment.  
Let $(X, d, \mu, \rho)$ be a pointed measured metric space where $\mu$ is a probability measure and has $X$ as its support. Let $(\eta_i, i\ge 1)$ be a sequence of i.i.d.~points of $X$ with common distribution $\mu$ and set $\eta_0=\rho$. We define a random symmetric and semi-infinite matrix 
\[
\mathrm M(X, d, \mu, \rho)=(d(\eta_i, \eta_j): 0\le i, j<\infty). 
\]
Note that the distribution of  $\mathrm M(X, d, \mu, \rho)$ only  depends on the equivalence class of $(X, d, \mu, \rho)$. 
Thanks to Gromov's reconstruction theorem (\cite[Section $3\frac{1}{2}.5$]{Gromov}), $\mathrm M(X, d, \mu, \rho)$ characterizes this equivalence class. In the rest of the paper and when no confusion arises, we often use the short-hand notation $\mathrm X=(X, d, \mu, \rho)$ to indicate that $\mathrm X$ stands for the whole equivalence class of $(X, d, \mu, \rho)$. 

\paragraph{Real trees. }
The metric spaces of interest here are real trees. A compact metric space $(X,d)$ is a \emph{real tree} if for any two points $u, v\in X$, the following two properties hold.  
First, there exists a unique isometry $\varphi: [0, d(u, v)]\to X$ such that $\varphi(0)=u$ and $\varphi(d(u, v))=v$; in this case, we denote by $\llb u, v\rrb_X:=\varphi([0, d(u, v)])$ or sometimes simply by $\llb u, v\rrb$ if the underlined metric space $(X, d)$ is clear from the context.
Second, if $f: [0, 1]\to X$ is an injective continuous map satisfying $f(0)=u$ and $f(1)=v$, then necessarily $f([0, 1])=\llb u, v\rrb_X$. A \emph{rooted real tree} $(T, d, \rho)$ is a real tree $(T, d)$ with a distinguished point called  the \emph{root}. 

Let $(T, d, \rho)$ be a rooted real tree. The \emph{degree} of a point $u\in T$, which we denote by $\deg(u, T)$, is the number of connected components of $T\!\setminus\!\{u\}$.  We let
\[
\Lf(T)=\{u\in T: \deg(u, T)=1\}, \quad 
\Br(T)=\{u\in T: \deg(u, T)\ge 3\} \;\text{ and }\; \Sk(T)=T\!\setminus\!\Lf(T)
\]
denote the set of the \emph{leaves},  the set of \emph{branch points} and the \emph{skeleton} of $T$, respectively. 
Note that the distance $d$ induces a sigma-finite measure $\ell$ on $T$ satisfying $\ell(\llb u, v\rrb)=d(u, v)$,  for any $u, v\in T$. We refer to $\ell$ as the \emph{length measure} of $T$. A \emph{subtree} $S$ of $T$ is a closed and connected nonempty subset of $T$. Observe that $(S, d)$ is itself a real tree. We often root $S$ at the point $\Rt(S)$, which is defined to be the unique point of $S$ minimising the distance to $\rho$\,; in that case, we say that $S$ is a \emph{rooted subtree}. 

%A \emph{measured real tree} is a pointed measured metric space $(T, d, \mu, \rho)$ where $(T, d)$ is a real tree. 
Let $(T, d, \rho)$ be a rooted real tree and let $u, v\in T$ be two distinct points.  We denote by $\SSub(T, u \,|\,v)$ the connected component of $T\!\setminus\!\{u\}$ which contains $v$.  Namely, 
\begin{equation}\label{def: SS}
\SSub(T, u \,|\,v)=\{ w\in T: u\notin \llb w, v\rrb\}. 
\end{equation}
Now let  $\bv=\{v_1, \dots, v_k\}$ be a set of $k$ points of $T$. We write 
\[
\Span(T,  \bv)=\cup_{1\le i\le k}\llb \rho, v_i\rrb
\] 
for the subtree of $T$ spanning $v_1, \dots, v_k$. %Then $\Span(T,\bv)$ is a compact metric space with the metric inherited from $T$. 
Next, observe that there is at most countably infinite collection of the connected components of $T\!\setminus\!\Span(T, \bv)$, since $(T, d)$ is compact. 
Let $\{C^\circ_i:  i\ge 1\}$ be this collection. 
For each $i\ge 1$, let $C_i$ be the closure of $C^\circ_i$ in $T$. Then one can check that  $C_i$ is a subtree and 
there exists a unique $b_i\in \Br(T)\cap\Span(T, \bv)$ such that $C_i=C^\circ_i\cup\{b_i\}$ and $b_i=\Rt(C_i)$.  Set $h_i=d(\rho, b_i)$. 
If $T$ is further equipped with a finite (Borel) measure $\mu$, then each $C_i$ is also equipped with a finite (Borel) measure which is the restriction of $\mu$ to $C_i$; by a slight abuse of notation we still denote this measure by $\mu$. 
In that case, we denote by $\mathrm C_i=(C_i, d, \mu, b_i)$ for the (equivalence class of) pointed measured metric space, $i\ge 1$; 
then the \emph{$k$-spine decomposition} of $T$ with respect to $\bv$ is the point measure on $\bbR_+\times\bbM$ defined as
\begin{equation}\label{def: k-spine}
\D(T, \bv)=\sum_{i\ge 1} \delta_{(h_i,\, \mathrm C_i)}
\end{equation}
%{\red add $(C_i,d_i,\mu,b_i)$ is an isometry class.}
%Note that $T$ can be obtained from $\Span(T, \bv)$ and $\D(T, \bv)$ by identifying the root of $C_i$ with the point $b_i\in \Span(T, \bv)$. 

A \emph{measured real tree} is a pointed measured metric space $(T, d, \mu, \rho)$ where $(T, d)$ is a real tree. For instance, each $\mathrm C_i$ in \eqref{def: k-spine} is a measured real tree.

\paragraph{The Brownian continuum random tree. }\label{p: Q}
One way to obtain a measured real tree starts from an \emph{excursion}: a continuous nonnegative  function $e: \bbR_+\to \bbR_+$ is said to be an excursion if $e$ has compact support and satisfies that $e_0=0$, $\zeta_e\!:=\sup\{s>0: e_s>0\}\in (0, \infty)$ and $e_s>0$, $\forall s\in (0, \zeta_e)$. Let $e$ be an excursion. For  $s, t\in [0, \zeta_e]$, let 
\begin{equation}\label{eq: def_d_e}
\tfrac{1}{2}\,d_e(s, t):=e_s+e_t-2\inf_{u\in [s\wedge t,\, s\vee t]}e_u.
\end{equation} 
The factor $1/2$ in the above definition is unconventional but suits our purpose here. 
Define $s\sim_e t$ if $d_e(s, t)=0$. Then $d_e$ induces a metric on the quotient space $[0, \zeta_e]/\!\!\sim_e$, which we still 
denote by $d_e$. Moreover, the metric space 
$(\cT_e:=[0, \zeta_e]/\!\!\sim, d_e)$
is a real tree (see e.g.~Theorem 2.1 
of \cite{Duquesne05}). Write $p_e: [0, \zeta_e]\to \cT$ for the canonical projection. We denote by $\mu_e$ the push-forward of the Lebesgue measure on $[0, \zeta_e]$ by $p_e$ and set $\rho_e:=p_e(0)$. Then 
$(\cT_e, d_e, \mu_e, \rho_e)$ is a measured real tree as defined previously. Moreover, it follows from the above construction that the support of $\mu_e$ is $\cT_e$ and that  $\mu_e(\cT_e)=\zeta_e$. 

Let $e$ denote the canonical process of $\mathbf C(\bbR_+, \bbR)$. 
For $a\in (0, \infty)$, let $\bP^{(a)}$ be the probability distribution on $\mathbf C(\bbR_+, \bbR)$ of the normalized Brownian excursion of  
length $a$, namely, $e$ under $\bP^{(a)}$ is distributed 
as a Brownian excursion conditioned on $\zeta_e=a$. The following \emph{scaling property} of Brownian excursions plays a crucial role in our treatment: for each $a>0$, 
\begin{equation}\label{eq: scaling}
\Big(\tfrac{1}{\sqrt{a}} e_{at}, t\ge 0\Big) \text{ under } \bP^{(a)} \; \eqd \; e \text{ under } \bP^{(1)}. 
\end{equation}
Recall the measured real tree $(\cT_e, d_e, \mu_e, \rho_e)$ from the paragraph above. 
We view (the equivalence class of) $(\cT_e, d_e, \mu_e, \rho_e)$ under $\bP^{(a)}$ as a random variable taking values in $\bbM$, whose distribution we still denote as $\bP^{(a)}$. In particular,  $\bP:=\bP^{(1)}$ is the \emph{law of the Brownian continuum random tree}. 
A real-valued random variable $\rR$ is a \emph{Rayleigh random variable} if $\rR$ has density $\mathbf 1_{\{x>0\}}xe^{-x^2/2}$. The following well-known fact will be used implicitly at various places: let $\eta\in \cT_e$ be a random point of distribution $\mu_e$ and let $\eta'$ be either another independent point of distribution $\mu_e$ or the root $\rho$; then, under $\bP$, in both cases $d_e(\eta, \eta')$ is a Rayleigh random variable.

%\paragraph{Remarks on the double randomness and some measurability issues.} 
In this work, we study stochastic processes defined on random measured real trees. In a general way, we construct these processes first for the canonical process $e$, or equivalently the real tree $(\cT_e, d_e, \mu_e, \rho_e)$; we then consider the ensemble under the law $\bP^{(a)}$, for $a>0$. See e.g.~\cite{AbSe02} for a construction of the Aldous--Pitman fragmentation in this manner. 
In the rest of the paper, %$\bcT$ stands for the equivalence class of the measured real tree 
$(\cT, d, \mu, \rho)$ stands for $(\cT_e, d_e, \mu_e, \rho_e)$ and $\bcT$ the corresponding measured real tree.

%In the rest of the paper, $\bcT=(\cT, d, \mu, \rho)$ stands for $(\cT_e, d_e, \mu_e, \rho_e)$.
%If $(X, d, \mu, \rho)$ is an equivalence class of pointed measured metric space, in what follows we often do not distinguish the difference between $(X, d, \mu, \rho)$ and a particular element of the class. We believe this will not cause confusion since the objects we consider can be statistics of the ``finite dimensional marginals'' :
%pick random points $x,y,z$ according to $\mu$, let $b$ be the branchpoint between the points $x,y,z$; 
%output the distances $d(x,y)$, $d(x,b)$ and the likes; 
%output the $\mu$-measure of the connected component of $T\{b\}$ containing $x$;
%output the isometry class of this same component, rooted and pointed at $b$ and $x$;
%and the likes.

\subsection{Cut tree of the Brownian continuum random tree}\label{sec:A-P}

Let $\bcT=(\cT, d, \mu, \rho)$ be as defined above, where $\cT$ is further equipped with the length measure $\ell$. Recall that $\bP$ is the law of the Brownian CRT. 
We define the cut tree for $\bcT$, following Bertoin and Miermont \cite{Bert12}. 
To that end, let $\cP$ be a Poisson point process on $\bbR_+\times \cT$ of intensity measure $dt\otimes \ell(dx)$. Every point $(t,x)\in \cP$ is seen as a \emph{cut} on $\cT$ at location~$x$ and  
arriving at time~$t$. Given $(\cT,d,\mu,\rho)$, let $(V_i)_{i\ge 1}$ be a sequence of i.i.d. points of $\cT$ with common distribution $\mu(\cdot)/\mu(\cT)$. Then for each $i\ge 1$ and $t\ge 0$, let $\cT_i(t)$ be the set of those points in $\cT$ which are still connected to $V_i$ at time $t$, that is
\begin{equation}\label{def: Tt}
\cT_i(t):=\{u\in \cT: \cP\cap ([0, t]\times \llb V_i, u\rrb)=\varnothing\}.
\end{equation}
For $i\ne j$, set $t_{ij}=\inf\{t\ge 0: \cP\cap ([0, t]\times \llb V_i, V_j\rrb)\ne \varnothing\}\in [0, \infty]$. 
Define a symmetric function $\delta: \mathbb Z_+^2 \to \bbR_+$ by setting $\delta(i, i)=0$ for $i\ge 0$ and 
\begin{equation}
\label{def: delta}
\delta(0, i) = \int_0^\infty \mu\big(\cT_i(s)\big)ds, \quad 
\delta(i, j) = \int_{t_{ij}}^\infty \Big\{\mu\big(\cT_i(s)\big)+\mu\big(\cT_j(s)\big)\Big\} ds, \quad i, j\ge 1, i\ne j. 
\end{equation}
\begin{prop}[\cite{Bert12}]\label{prop: cG}
Under $\bP$, the following statements (I-II) hold almost surely.
\begin{compactenum}[I.]
\item
For all $i, j\ge 1$ and $i\ne j$, $\delta(0, i)\in (0, \infty)$,  $t_{ij}\in (0, \infty)$, and $\delta(i, j)\in (0, \infty)$. 
\item
There exists a measured real tree $\cut(\cT)=(\cG, d_{\cG}, \mu_{\cG}, \rho_{\cG})$ and a sequence of its points $(V'_i)_{i\ge 0}$ with $V'_0=\rho_{\cG}$ such that  
\[
d_{\cG}(V'_i, V'_j)\, = \, \delta(i, j), \quad \forall\, 0\le i, j<\infty.
\]
Also, conditional on $\mu_{\cG}$\,, $(V'_i)_{i\ge 1}$ has the distribution of a sequence of i.i.d.~points with common probability distribution $\mu_{\cG}$. 
\end{compactenum}
Moreover, $\cut(\cT)$ under $\bP$ has the same distribution as $\bcT$ under $\bP$. 
\end{prop}

Part of the above Proposition says that $(\cG, d_{\cG}, \mu_{\cG}, \rho_{\cG})$ is uniquely determined by $(\delta(i, j): 0\le i, j<\infty)$ (up to measure-preserving isometry), by Gromov's reconstruction theorem. Therefore, the measured real tree $\cut(\cT)$ is well-defined, $\bP$-a.s.
It also follows from the above construction that the mapping $(\cT, \cP)\mapsto \cut(\cT)$ is measurable; see a related discussion in \cite{Bert12}. 

If, for $t\ge 0$, we write $i\sim_t j$ if and only if $\cT_i(t)=\cT_j(t)$, then $\sim_t$ defines an exchangeable  random partition of $\bbN$. Moreover, the family of partitions $\{\sim_t: t\ge 0\}$ has a natural genealogical structure, which is described by $\cut(\cT)$; we refer to \cite{Bert12, bertoinfrag} for more details.  
Note that the root of $\cT$ is irrelevant in the above cutting process, whereas the root of $\cut(\cT)$ is meaningful for the genealogy it describes.

\subsection{Partial cut trees as an approximation of $\cut(\cT)$}

We recall here the definition of the $k$-partial cut tree of the measured real tree $\bcT=(\cT, d, \mu, \rho)$, as well as some of its properties, which will be useful for the proof later. These properties are mostly proven in \cite{ABH10} and \cite{BW} for $k=1$, but the case $k>1$ in general also follows from the arguments there. 

For $i, j\ge 1$ and $t\ge 0$, recall $\cT_i(t)$ from \eqref{def: Tt} and $t_{i, j}$ just below. Observe that  $\cT_i(t)\subseteq \cT_i(t')$ if $t\ge t'$. We then denote $\cT_i(t-)=\cap_{s<t} \cT_i(s)$, for $t>0$. 
Set $t^\ast_i=\max_{1\le j<i} t_{i, j}$ for $i\ge 2$ and $t^\ast_1=0$. 
For each $i\ge 1$, 
%let $\cJ_i$ be a countable set such that $\cJ_i, i\ge 1$ are all disjoint and that 
let $\{t^{_{(i)}}_m: m\in \N\}$ be the set of discontinuity points of the mapping $t\in (t^\ast_i, \infty)\mapsto \mu(\cT_i(t))$. Then for each $m\in \N$, almost surely  there exists a unique point $x^{_{(i)}}_m\in \cT_i(t^{_{(i)}}_m-)$ 
such that  $(t^{_{(i)}}_m, x^{_{(i)}}_m)\in \cP$. 
It follows that $\Delta^{_{\circ, (i)}}_m:=\cT_{i}(t^{_{(i)}}_m-)\setminus\cT_i(t^{_{(i)}}_m)$ is non empty and $x^{_{(i)}}_m\in \Delta^{_{\circ,(i)}}_m$. Let $\Delta^{_{(i)}}_m$ be the closure of $\Delta^{_{\circ, (i)}}_m$ and let $\mathbf\Delta^{_{(i)}}_m=(\Delta^{_{(i)}}_m, d, \mu, x^{_{(i)}}_m)$ be the (equivalence class of) measured real tree induced. Note in particular that $\mu(\Delta^{_{(i)}}_m)=\mu(\Delta^{_{\circ,(i)}}_m)$. 
% the measured real tree obtained from the closure of $(\Delta^{_{\circ, (i)}}_m, d, \mu)$. %Observe that $x^{_{(i)}}_m\in \Delta^{_{(i)}}_m$. 
We also define $h^{_{(i)}}_m:=\int_{[0, \, t^{_{(i)}}_m]} \mu(\cT_i(s))ds$ for each $m\in \N, i\ge 1$. 
Let us recall from \eqref{def: k-spine} the $k$-spine decomposition of a real tree.  
\begin{prop}[\cite{BW}]\label{prop: cG_k}
Under $\bP$, the following holds almost surely: 
for each $k\ge 1$, there exists a measured real tree $\cut(\cT; V_1, \dots, V_k)=(\cG_k, d_{\cG_k}, \mu_{\cG_k}, \rho_{\cG_k})$ and $k$ points $V'_1, \dots, V'_k\in \cG_k$ such that 
\begin{align*}
&\Span(\cG_k, \{V'_1, \dots, V'_k\})  \text{ is isometric to }\Span(\cG, \{ V'_1, \dots, V'_k\})\,,    \\
&\D(\cG_k, \{V'_1, \dots, V'_k\})=\sum_{1\le i\le k}\sum_{m\in \N} \delta_{(h^{_{(i)}}_m, \,\mathbf\Delta^{_{(i)}}_m)}\, , 
\end{align*}
where the real tree $(\cG, d_{\cG})$ is defined in Proposition \ref{prop: cG}; conditional on $\mu_{\cG_k}$, $(V'_1, \dots, V'_k)$ are distributed as $k$ independent points of common probability distribution $\mu_{\cG_k}$. 
Moreover for each $k$, $\cut(\cT; V_1, \dots, V_k)$ under $\bP$ has the same distribution as $\bcT$ under $\bP$. %, and conditional on $\cut(\cT; V_1, \dots, V_k)$, $(V'_1, \dots, V'_k)$ are distributed as $k$ independent points of common probability distribution $\mu_{\cG_k}$. 
\end{prop}

The case $k=1$ of Proposition \ref{prop: cG_k} corresponds to Theorem 1.7 of \cite{ABH10} (see also Theorem 3.2 of \cite{BW}). The arguments there can be straightforwardly adapted to yield a proof of the general case $k\ge 1$. Now recall from page \pageref{p: Q} the probability measure $\bP^{(a)}$ for the measured real tree encoded by a Brownian excursion of length $a$. Recall also \eqref{eq: scaling}, the scaling property of Brownian excursions. The following is a direct consequence of Proposition \ref{prop: cG_k} and a multi-point version of the Bismut decomposition for the Brownian CRT (\cite[Theorem 3]{Legall1993}). 

\begin{cor}[Scaling property]\label{cor: 1}
Let $k\ge 1$. For $1\le i\le k$ and $m\in \N$, denote by $\mu_{i, m}=\mu(\Delta^{_{(i)}}_m)$. Then under $\bP$, conditional on the collection $\{\mu_{i, m}: 1\le i\le k, m\in \N\}$, the measured real trees $\{\mathbf\Delta^{_{(i)}}_m:  1\le i\le k, m\in \N\}$ are independent and $\mathbf\Delta^{_{(i)}}_m$ has distribution $\bP^{(\mu_{i, m})}$. 
\end{cor}

The following is the analog of Algorithm \ref{alg: k-cut} for the continuum trees. 
\begin{lem}[Recurrence relation for $(\cG_k)$]\label{lem: rec}
Let $k\ge 2$. Let $i_{k}=\min\{i\in \{1, \dots, k-1\}: \exists \, m\in \N \text{ such that } V_{k}\in \Delta^{_{(i)}}_m\}$ and let $m_{k}\in \N$ be the index such that $V_{k}\in \Delta^{_{(i_k)}}_{m_k}$. Then under $\bP$, we have a.s.
\[
\D(\cG_{k}, \{V'_1, \dots, V'_{k-1}\})=\sum_{\substack{1\le i\le k:\\ i\ne i_k}}\sum_{m\in \N} \delta_{\big(h^{_{(i)}}_m, \,\mathbf\Delta^{_{(i)}}_m\big)}+\sum_{\substack{m\in \N: \\m\ne m_k}}\delta_{\big(h^{{(i_k)}}_m,\, \mathbf\Delta^{{(i_k)}}_m\big)}+\delta_{\big(h^{{(i_{k})}}_{{m_k}}, \,\cut(\mathbf\Delta^{{(i_k)}}_{m_k}; V_{k})\big)}.
\]
\end{lem}

Note that in the above formula, $\cut(\Delta^{{(i_k)}}_{m_k}; V_{k})$ is well-defined under $\bP$ thanks to Corollary~\ref{cor: 1}. Lemma~\ref{lem: rec} is easily seen to hold true by comparing the definitions of $\cG_k$ and $\cG_{k-1}$. 

The following observation constitutes the foundations of our partial reconstructions. 
\begin{lem}\label{lem: key}
Let $k\ge 1$ and let $\eta, \eta'$ be two independent points of $\cT$ sampled according to the distribution $\mu(\cdot)/\mu(\cT)$. We have the following. 
\begin{enumerate}[a)]
\item
Set $\cI_k(\eta, \eta')=\{(i, m): 1\le i\le k, m\in \N, \Delta^{_{\circ,(i)}}_m\cap\llb \eta, \eta'\rrb_{\cT}\ne \varnothing\}$.  Then $\bE( |\cI_k(\eta, \eta')|) <\infty$. 
\item
For $(i, m)\in \cI_k(\eta, \eta')$,  set 
$\rR'_{i, m}=\mu(\Delta^{_{(i)}}_m)^{-\frac{1}{2}}\cdot\ell(\Delta^{_{\circ,(i)}}_m\cap\llb \eta, \eta'\rrb_{\cT})$. Under $\bP$, conditional on the set $\cI_k(\eta, \eta')$, $\{\rR'_{i, m}: (i, m)\in \cI_k(\eta, \eta')\}$ are independent Rayleigh random variables which are independent of the collection $\{\mu(\Delta^{_{(i)}}_m): 1\le i\le k, m\in \N\}$. 
\end{enumerate}
\end{lem}

\begin{proof}
Proof of a). For $i\ge 1$, let $b_i$ be the unique  point of $\cT$  satisfying $\llb b_i, V_i\rrb=\llb \eta, V_i\rrb\cap \llb \eta', V_i\rrb$, and let $\tau_i=\inf\{t>0: \cP\cap ([0, t] \times \llb b, V_i\rrb)\ne\varnothing\}$, the time of the first cut on $\llb b_i, V_i\rrb$. Observe that $\tau_i$ is an exponential random variable of parameter $d(V_i, b_i)$. Let $\tau=\max_{1\le i\le k}\tau_i$, which has finite expectation under $\bP$.
Denote by $N$ the cardinality of $\cP\cap([0, \tau]\cap \llb \eta, \eta'\rrb)$; then $N$ is distributed as a Poisson random variable of rate $\tau\cdot d(\eta, \eta')$. Note that $|\cI_k(\eta, \eta')|$ is stochastically dominated by $N$, since for all $t>\tau$, $\llb \eta, \eta'\rrb \cap \cT_i(t)=\varnothing$ for all $i=1, \dots, k$. This yields $\bE(|\cI_k(\eta, \eta')|)<\infty$. 

Proof of b). The case $k=1$ is a consequence of Theorem 5.1 in \cite{ABH10}. The general case follows by adapting the arguments there and we omit the formal proof.
\end{proof}

Lemma \ref{lem: key} says that $\llb \eta, \eta'\rrb$ only intersects a finite sub-collection of $\{\Delta^{_{ (i)}}_m: 1\le i\le k, m\in \N\}$. This suggests that to reverse the mapping of the $k$-partial cut tree $\bcT\mapsto \cut(\cT; V_1, \dots, V_k)$, which boils down to ``reconstructing'' the distance $d(\eta, \eta')$ from the partial cut tree, we should first sample the collection $\{\Delta^{(i)}_m: (i, m)\in \cI_k(\eta, \eta')\}$ from the $k$-spine decomposition of $\cG_k$. 
In the next section, we develop this idea into a definition of the partial shuffle transforms.

\section{The shuffle transform}
\label{sec: shuff}

In this section, we give the definition of the shuffle transform by generalizing the construction in Section~\ref{sec: intro_discret}. Recall that $\bcT=(\cT, d, \mu, \rho)$ is a measured real tree and $\bP$ is the law of the Brownian CRT. We aim at defining for each $k\ge 1$, a (random) semi-infinite matrix $\Gamma_k=(\gamma_k(i, j): 1\le i, j<\infty)$ which will play the role of the $k$-partial reversal for $\bcT$. Indeed, 
$\gamma_k(i, j)$ will represent the distance between two independent uniform points $\eta_i, \eta_j$ obtained from the $k$-partial reversal. The main theorem (Theorem \ref{thm: cv_gamma}) then states that under $\bP$,  the sequence $(\gamma_k(i, j))_{k\ge 1}$ converges almost surely to a limit $\gamma_\infty(i, j)$, for all $(i, j)$. Moreover, the limit $(\gamma_\infty(i, j): 1\le i, j<\infty)$ characterizes a measured real tree which will be the image of $\cT$ by the shuffle transform.  

To define $\Gamma_k$, we do the following. First, for each $k$, we sample a collection of random points or marks, in an analogous way as we have done for Cayley trees. We then explain how to build a path between two independent points in the $1$-partial reversal using these marks. 
See Fig.~\ref{fig:path}. 
Relying on a recursive procedure, this construction is then extended to more general partial reversals, which gives us the definition of $\Gamma_k$. %In what follows, it is often convenient to suppose that the elements of a set $A$ are indexed by the set $\cJ(A)$, which are supposed to be disjoint for different $A$.

\paragraph{Sampling the marks. }
Let $(V_i)_{i\ge 1}$ be a sequence of i.i.d.\ points of $\cT$ whose common distribution is $\mu(\cdot)/\mu(\cT)$. For $k\ge 1$, write $\bV_k=\{V_1, \dots, V_k\}$ and then $\cS_k=\Span(\cT, \bV_k)$. Recall from \eqref{def: SS} the notation $\SSub(\cT, u \,|\, v)$. Set $\cT_1=\cT$. For $k\ge 2$, with probability $1$, there is a unique connected component of $\cT\setminus\cS_{k-1}$ containing $V_{k}$; set $\cT_k$ to be the smallest rooted subtree of $\cT$ containing that connected component. 
% and  let $B_{k+1}$ be the unique element of $\cT_{k+1}\cap\cS_k$. 
%We regard each $(\cT_k, d, \mu, B_k)$ as a measured real tree. 
We define the sequences $(\rho_k)_{k\ge 1}$ and $(\cM_k)_{k\ge 1}$ as follows:
\begin{itemize}
\item
For each $k\ge 1$, let $\rho_k\in \cT_k$ be a random point having the following distribution
\begin{equation}\label{def: rho}
\forall \text{ Borel set } B\subset \cT : \; \bP(\rho_k\in B)=\mu|_{\cT_k}(B), \quad \text{ where } \mu|_{A}:=\frac{\mu(\cdot\cap A)}{\mu(A)}, \; A\subset \cT. 
\end{equation}
\item
For each $k\ge 1$, let  $\{C_{k, i}^\circ: i\in \N\}$ be the collection of the connected components of $\cT_k\setminus\Span(\cT_k, \{V_k\})$ and let $C_{k, i}$ be the smallest rooted subtree of $\cT$ containing $C_{k, i}^\circ$. Note that $b_{k, i}:=\Rt(C_{k, i})$ is the only element of $\cS_k\cap C_{k, i}$. 
For each $i\ge 1$, let $U_{k, i}\in \SSub(\cT_k, b_{k, i} \,|\, V_k)$ be a random point with distribution $\mu|_{\SSub(\cT_k,\, b_{k, i} \,|\,V_k)}$.  
Observe that $\{C_{k, i}: i\ge 1\}$ is a collection of disjoint rooted subtrees of $\cT_k$ and almost surely $U_i\in C_{k, i'}$ for some $i'\ne i$. 
We define $\cM_k$ to be the collection 
\begin{equation}\label{def: cM_k}
\cM_k=\{(C_{k, i}, U_{k, i}): i\in \N\}. 
\end{equation}
%where $\{C_i: i\ge 1\}$ is a collection of disjoint subtrees of $\cT$ and $U_i\in C_{i'}$ for some $i'\ne i$, almost surely. 
\end{itemize}

\paragraph{Building paths from the marks. } 
Suppose that we have a collection %that $v\in \cT$ and that 
\[
\cN=\{(C_i,\,u_i): i\ge 1\}, 
\]
where $\{C_i: i\ge 1\}$ is a collection of disjoint rooted subtrees of $\cT$ and $u_i\in C_{i'}$ for some $i'\ne i$, for all $i\ge 1$. 
Then for each $u\in \cup_{i\ge 1} C_i$, we introduce the following sequence
%point measure on $\bbM\times \cT$ such that $\sum_{i\in \cI(\cT, v)} \delta_{(d(\rho,\, b_i), \,(C_i, \,d, \,\mu, \,b_i))}$ is the $1$-spine decomposition of $\cT$ with respect to $v$ and such that $u_i\in \SSub(\cT, b_i \,|\, v)$, for all $i\in \cI(\cT, v)$. 
%Then for each $u\in \cT\!\setminus\!\llb \rho, v\rrb$, we introduce the following sequence
\[
\chi^u(\cT, \cN)=\big\{(C^u_j, a^u_j, p^u_j):  j\ge 1\big\}, 
\]
which is defined in the following inductive way: 
$a^u_1=u$ and for each $j\ge 1$, 
\begin{equation}\label{eq: mk_1}
\text{let } i_j \text{ be the index such that } a^u_j\in C_{i_j}, \text{ then }\; 
C^u_j=C_{i_j}, \; p^u_j=\Rt(C_{i_j}) \,\text{ and }\, a^u_{j+1}=u_{i_j}\,.
\end{equation}
Note that such an $i_j$ exists since $u$ and all $u_i$ belong to $\cup_i C_i$ and $i_j$ is unique as the $C_i$'s are disjoint.  
Next, let $u, u'\in\cup_i C_i$ be two distinct points; set 
\begin{equation}\label{def: tau}
\sI(u, u'; \cT, \cN)=\inf\big\{j\ge 1: C^u_j\in \{C^{u'}_{j'}:  j' \ge 1\}\big\},
\end{equation}
with the convention that $\inf\varnothing=\infty$. Observe that $\sI(u, u'; \cT, \cN)<\infty$ if and only if $\sI(u', u; \cT, \cN)<\infty$; in that case,  $C^u_{\sI(u, u'; \cT, \cN)}=C^{u'}_{\sI(u', u; \cT, \cN)}$. 
Let $\tilde\chi(u, u'; \cT, \cN)$ be a (possibly infinite) collection
\begin{equation}\label{def: chi}
\tilde\chi(u, u'; \cT, \cN)=\big\{\big(C^{u, u'}_m, a^{u, u'}_m, p^{u, u'}_m\big): m\in \N, m\le N^{u, u'}\big\},
\end{equation}
where $N^{u, u'}=|\tilde\chi(u, u'; \cT, \cN)|\in \N\cup\{\infty\}$ and $\{C^{u, u'}_m: m\in \N, m\le N^{u, u'}\}$ consists of disjoint rooted subtrees. We further requires $\tilde\chi(u, u'; \cT, \cN)$ to satisfy the following conditions \eqref{cd1} and \eqref{cd2}: 
\begin{align}\label{cd1}
&\big\{C^{u, u'}_m: m\in \N, m\le N^{u, u'}\big\}\\ \notag
&\qquad\quad=\big\{C^u_j: 1\le j< \sI(u, u'; \cT, \cN)\}\cup\{C^{u'}_j: 1\le j<\sI(u', u; \cT, \cN)\big\}\cup\{C^u_{\sI(u, u'; \cT, \cN)}\},
\end{align}
where the last term is taken to be $\varnothing$ if $\sI(u, u'; \cT, \cN)=\infty$.  
And 
\begin{align}\notag
%\left\{
&\text{{\bf case 1:}
if $C^{u, u'}_m=C^u_j$ with $1\le j<\sI(u, u'; \cT, \cN)$,  then $a^{u, u'}_m=a^u_j$ and $p^{u, u'}_m=p^u_j$};\\ \label{cd2}
&\text{{\bf case 2:}
if $C^{u, u'}_m=C^{u'}_j$ with $1\le j<\sI(u', u; \cT, \cN)$,  then $a^{u, u'}_m=a^{u'}_j$ and $p^{u, u'}_m=p^{u'}_j$};\\ \notag
&\text{{\bf case 3:}
if $\sI(u, u'; \cT, \cN)<\infty$ and $C^{u, u'}_m=C^u_{\sI(u, u'; \cT, \,\cN)}$, then $a^{u, u'}_m=a^u_{\sI(u, u'; \cT,\, \cN)}$}\\ \notag
&\qquad\quad \text{ and $p^{u, u'}_m=a^{u'}_{\sI(u', u; \cT,\, \cN)}$}.
%\right.
\end{align}
For the sake of definiteness, we can require the elements of $\tilde\chi(u, u'; \cT, \cN)$ to be ranked in decreasing order according to $\mu(C^{u, u'}_m)$, say, but the order of $\tilde\chi(u, u'; \cT, \cN)$ is irrelevant for the rest of the construction.  
%Observe that if $\sI(u, u'; \cT, \cN)<\infty$, then also $\sI(u', u; \cT, \cN)<\infty$ and therefore $\tilde\chi(u, u'; \cT, \cN)$ is a finite set in this case. 
The above definition can be seen as an analog of the 1-partial shuffle transform for Cayley trees, which has been illustrated in Figure \ref{fig:path}. Indeed, each $C_i$ in the collection $\mathcal N$ can be understood as a subtree isolated from the rest of the tree due to the cutting at its root and the point $u_i$ represents the neighbor of this cut in the original tree. Next, we choose a subcollection of subtrees  $(C^{u, u'}_m)_{m\ge 1}$ which contain a portion of the path between $u$ to $u'$ in the original tree (i.e.~the white trees in Figure \ref{fig:path}). The subcollection is chosen by following this path: we start from $u=a^u_1$ and look for the nearest cut to $u$ (i.e.~$p^u_1$) on the path; the subtree rooted at $p^u_1$ is then $C^u_1$ and the neighbor of $p^u_1$ is $a^u_2$, etc. Proceeding in the same way from $u'$ yields another sequence $\chi^{u'}(\cT, \cN)$. Merging the two sequences up to the point where they coincide gives  $\tilde\chi(u, u'; \cT, \cN)$.

\paragraph{Defining partial reversals. }
Let $(\rho_k)_{k\ge 1}$ and $(\cM_k)_{k\ge 1}$ be as  defined in \eqref{def: rho} and \eqref{def: cM_k}.  
Let $(\eta_i)_{i\ge 2}$ be an independent sequence of i.i.d.~points of common distribution $\mu|_{\cT}$ and set $\eta_1=\rho_1$. 
Recall the definition of  $\tilde\chi(u, u'; \cT, \cN)$ from \eqref{def: chi}. Recall $\cT_k$ is a rooted subtree containing $V_k$. 
For all $i, i'\ge 1$ such that $i\ne i'$, we define
\begin{equation}\label{def: chi'}
\chi_k^{i, i'}=\big\{(C^{i, i'}_{k, m}, a^{i, i'}_{k, m}, p^{i, i'}_{k, m}): m\in\N, m\le N^{i, i'}_k\big\}, \quad k\ge 1,
\end{equation}
in the following inductive way. Let $\chi_1^{_{i, i'}}=\tilde\chi(\eta_i, \eta_{i'}; \cT, \cM_1)$, which is well-defined since almost surely we have $\eta_i, \eta_{i'}\in \cT\!\setminus\!\cS_1\subset \cup_i C_{k, i}$.
Suppose that $\chi_{k}^{_{i, i'}}$ has been defined. If $\cT_{k+1}\notin \{C^{_{i, i'}}_{k,m}:  m\in \N,  m\le N^{_{i, i'}}_k\}$, then we set $\chi_{k+1}^{_{i, i'}}=\chi_{k}^{_{i, i'}}$. Otherwise, let $m_k$ be the index such that $\cT_{k+1}
=C^{_{i, i'}}_{k, m_k}$; then we define $\chi^{_{i, i'}}_{k+1}$ to be the collection 
\begin{equation}\label{def: chi''}
\chi^{i, i'}_{k+1}=\big\{(C^{i, i'}_{k, m}, a^{i, i'}_{k, m}, p^{i, i'}_{k, m}): m\in \N\!\setminus\!\{m_k\}, m\le N^{i, i'}_k\big\}\cup\tilde\chi(a^{i, i'}_{m_k}, \rho_{k+1}; \cT_{k+1}, \cM_{k+1}). 
\end{equation}
Note that $\tilde\chi(a^{i, i'}_{m_k}, \rho_{k+1}; \cT_{k+1}, \cM_{k+1})$ is well-defined, since $\cT_{k+1}$ is a rescaled version of $\cT$ and almost surely $a^{i, i'}_{m_k}, \rho_{k+1}\in \cT_k\setminus\cS_k$. 
Moreover, the role of $\rho_{k+1}$ could be understood as follows: analogously to the discrete construction (Algorithm \ref{alg: k-rev}), the ``replacement'' of $\cT_{k+1}$ will be rooted at $\rho_{k+1}$.  

For each $k\ge 1$, let us define a symmetric matrix $\Gamma_k=(\gamma_k(i, i'):  1\le i, i'<\infty\big)$ where 
\begin{equation}\label{def: gamma}
\gamma_k(i, i)=0 \quad \text{ and } \quad \gamma_k(i, i')=\sum_{m\in \N: m\le N^{i, i'}_k}d(a^{i, i'}_{k,m}, p^{i, i'}_{k,m}), \quad i\ne i'.
\end{equation}

\begin{lem}\label{lem: gamma_k}
Let $k\ge 1$, $i, i'\ge 1$ and $\gamma_k(i, i')$ be defined as in \eqref{def: gamma}. 
Then under $\bP$, we have $\gamma_k(i, i')< \infty$ almost surely. 
\end{lem}

\begin{thm}\label{thm: cv_gamma}
Under $\bP$, the following statements hold almost surely.
\begin{enumerate}[a)]
\item
The sequence of matrices $\{\Gamma_k=(\gamma_k(i, j))_{1\le i, j<\infty}: k\ge 1\}$ converges almost surely in the product topology of \,$\bbR^{\mathbb Z_+\times \mathbb Z_+}$. Denote by $\Gamma_\infty=(\gamma_\infty(i, j))_{1\le i, j<\infty}$ the almost sure limit. 
\item
There exists a measured real tree $\shuff(\cT)=(\cH, d_{\cH}, \mu_{\cH}, \rho_{\cH})$ and a sequence of its points $(\varsigma_i)_{i\ge 1}$ with $\varsigma_1=\rho_{\cH}$ such that 
\[
d_{\cH}(\varsigma_i, \varsigma_j)\, =\, \gamma_\infty(i, j), \quad \forall \, 1\le i, j<\infty.
\]
Moreover, conditional on $\cH$, $(\varsigma_i)_{i\ge 2}$ is distributed as a sequence of i.i.d.~points with common probability distribution $\mu_{\cH}$. 
\item
Set $\eta_0=\rho$. Recall from \eqref{def: delta} the matrix $(\delta(i, j))_{0\le i, j<\infty}$. 
We have 
\begin{equation}\label{id: matrix}
\big(d(\eta_i, \eta_j), \gamma_\infty(i+1, j+1)\big)_{ 0\le i, j<\infty}\text{ under } \bP \;\eqd \;\big(\delta(i, j), d(V_{i+1}, V_{j+1})\big)_{0\le i, j<\infty}  \text{ under } \bP. 
\end{equation}
In particular, this implies that the pair $(\bcT, \shuff(\cT))$ under $\bP$ has the same distribution as $(\cut(\cT), \bcT)$ under $\bP$. 
\end{enumerate}
\end{thm}

\paragraph{The mapping $\bcT\mapsto \shuff(\cT)$ is measurable. } Indeed, in the above construction, we have performed a sequence of measurable operations on $\cT$ with respect to the Gromov--Prokhorov topology. These operations can be seen as compositions of the following basic ones: 
\begin{compactenum}[--]
\item
sample two independent random points $V$ and $V'$ according to the mass measure $\mu$ and output the distances $d(\rho, V), d(V, V')$; %and denote by $b$ be the branch point between $V$ and $V'$ (i.e.~the unique element of $\llb \rho, V\rrb\cap\llb \rho, V'\rrb\cap\llb V, V'\rrb$);
\item
denote by $b$ the branch point of $V$ and $V'$, that is, the element of $\llb V, V'\rrb$ minimising the distance to the root $\rho$; 
for an independent point $\xi$ of law $\mu|_{\cT}$, determine if $\xi\in \SSub(\cT, b\, |\, V')$, which is the same to see if $d(\xi, V)+d(\rho, V')=d(\xi, \rho)+d(V, V')$; 
\item
output $\mu(\SSub(\cT, b\, |\, V'))$, which a.s.~equals $\lim_{k\to\infty} \frac{1}{k}\sum_{1\le i\le k}\mathbf{1}_{\{\xi_i\in  \SSub(\cT, b\, |\, V')\}}$, where $(\xi_i)_{i\ge 1}$ are independent points of common law $\mu|_{\cT}$; 
\item
determine if two rooted subtrees $C$ and $C'$ are identical, which reduces to compare the two matrices $\mathrm M(C, d, \mu|_{C}, \Rt(C))$ and $\mathrm M(C', d,  \mu|_{C'}, \Rt(C'))$. 
\end{compactenum}

\paragraph{Remark. }
As suggested by the discrete construction, the random points $(U_{k, i})_{k, i\ge 1}$ and $(\rho_k)_{k\ge 2}$ are the ``traces'' of the cuts left in $\cut(\cT)$. After completing an earlier version of this work, we have learned of the approach of Addario-Berry, Dieuleveut \& Goldschmidt \cite{ADG16+}, who have made a rigorous statement out of this intuition. In their work, they enrich $\cT$ with a collection of points formed by the cuts and a sequence of i.i.d.~leaves $(\xi_i)_{i\ge 1}$; similarly, $\cut(\cT)$ is enriched with the images of the cuts and $(\xi_i)_{i\ge 1}$ by the cut tree transform; then they give a reconstruction procedure (different from ours) which allows them to reconstruct almost surely the enriched $\cT$ from the enriched $\cut(\cT)$. 
Let us also remark that we can sample the randomness of the marks prior to the choice of $(V_i)_{i\ge 1}$. For this, simply sample for each branch point $b$ of $\cT$ a pair of independent points $(r^b_0, r^b_1)$, each uniformly distributed in one of the two subtrees above $b$. 
After taking $(V_i)_{i\ge 1}$, we can define the sequence $(\rho_k)$ and $(U_{k, i})_{k, i\ge 1}$ from $\{r^b_i:  b\in \Br(\cT), i=0, 1\}$ as a function of $(V_i)$. For example, $\rho_k=r^b_i$ where $b$ is the root of $\cT_k$ and $r^b_i$ is the random point associated to $b$ which belongs to $\cT_k$ (since $\cT_k$ is a subtree above $b$, there must exist one). The choice of $(U_{k, i})_{k, i\ge 1}$ is more tedious to put down; we omit it. 
This construction bears some resemblance  to the one given in \cite{ADG16+}. 

\section{Partial reversals and their convergence}
\label{sec: proof}
In this section, we give the proofs of Lemma \ref{lem: gamma_k} and Theorem \ref{thm: cv_gamma}. 
 
\subsection{Preliminary and proof of Lemma \ref{lem: gamma_k} }

Recall from \eqref{def: tau} the notation $\sI(u, u'; \cT, \cN)$. We first recall the following result, obtained  in \cite{BW}. 
\begin{lem}[\cite{BW}, Theorem 6.1]
\label{lem: tau}
If $\eta, \eta'$ are two independent points of $\cT$ with common distribution $\mu|_{\cT}$, and if $\cM_1$ is the collection in \eqref{def: cM_k}, then $\bE[\sI(\eta, \eta'; \cT, \cM_1)+\sI(\eta', \eta\,; \cT, \cM_1)]<\infty$.
\end{lem}

\begin{lem}\label{lem: chi}
Let $k\ge 1$ and $i, i'\ge 1$, $i\ne i'$. Let $\chi^{_{i, i'}}_k$  be as defined in \eqref{def: chi'}.  For each $m\in \N, m\le N^{i, i'}_k$, denote $\nu_{k, m}=\mu(C^{_{i, i'}}_{k, m})$. Then $\bE[N^{i, i'}_k]<\infty$. Moreover under $\bP$, conditional on the collection $\{\nu_{k, m}: 1\le m\le N^{i, i'}_k\}$, the collection $\chi^{_{i, i'}}_k$ consists of independent elements which are distributed as follows: 
\begin{compactenum}[(a)]
\item
$C^{_{i, i'}}_{k, m}$ has the law $\bP^{(\nu_{k, m})}$; and given $C^{_{i, i'}}_{k, m}$: 
\item
$a^{_{i, i'}}_{k, m}$ is an independent point of law $\mu|_{C^{_{i, i'}}_{k, m}}$, 
\item
$p^{_{i, i'}}_{k, m}$ is either the root of $C^{_{i, i'}}_{k, m}$ or another independent point of law $\mu|_{C^{_{i, i'}}_{k, m}}$. 
\end{compactenum}
\end{lem}

\begin{proof}
We proceed by induction on $k$. For $k=1$, by definition, $\chi^{_{i, i'}}_1=\tilde\chi(\eta_i, \eta_{i'}; \cT, \cM_1)$; then by the definition of the latter,  $N^{i, i'}_1=\sI(\eta_i, \eta_{i'}; \cT, \cM_1)+\sI(\eta_{i'}, \eta_i\,; \cT, \cM_1)-1$. It follows from Lemma \ref{lem: tau} that $\bE(N^{i, i'}_1)<\infty$. Recall from \eqref{def: k-spine} the $1$-spine decomposition of $\cT$ with respect to $V_1$.  By construction, $\{C^{_{i, i'}}_{k, m}: 1\le m\le N^{i, i'}_1\}$ is a sub-collection of $\{C_{1, i}: i\ge 1\}$, the closures of the connected components of $\cT\!\setminus\!\llb\rho, V_1\rrb$. Moreover, from the definition of $(U_{1, i})$ and the definition in \eqref{eq: mk_1} we see that the event that $C_{1, i}$ belongs to this sub-collection only depends on its $\mu$-mass. We then deduce from Corollary \ref{cor: 1} that conditional on their masses, $C^{_{i, i'}}_{k, m},  1\le m\le N^{i, i'}_1$, are independent, each one being a rescaled Brownian CRT. We next check that each $a^{_{i, i'}}_{k,m}$ is a $\mu$-random point restricted to $C^{_{i, i'}}_{k,m}$. But this  follows from  the definition of $(U_{1, i})$, \eqref{eq: mk_1} and the definition of $a^{_{u, u'}}_{k,m}$ in \eqref{cd2}. On the other hand, $p^{_{i, i'}}_{k, m}$ is either the root of $C^{_{i, i'}}_{k, m}$ ({\bf case 1} \& {\bf 2} in \eqref{cd2}) or another point independent of $a^{_{i, i'}}_{k,m}$ with distribution $\mu|_{^{C^{_{i, i'}}_{k,m}}}$ ({\bf case 3} in \eqref{cd2}). 
In this way, we verify the statements of the lemma for $k=1$. 

Now we assume that the lemma holds up to $k-1$, for some $k\ge 2$. Let us show that it also holds for $k$. Recall that $\cT_k$ is the closure of the connected component of $\cT\setminus\cS_{k-1}$ which contains $V_k$. If $\cT_k\in \{C^{_{i, i'}}_{k,m}: 1\le m\le N^{i, i'}_k \}$, then the statement for $k$ follows trivially from the induction hypothesis. If instead, $\cT_k=C^{_{i, i'}}_{k, m_{k-1}}$ with $1\le m_{k-1}\le N^{i, i'}_{k-1}$, then by the inductive hypothesis, $\cT_k$ is a rescaled Brownian CRT with total mass $\mu(\cT_k)$ and $a_k:=a^{_{i, i'}}_{k, m_{k-1}}$ is a point of $\cT_k$ with distribution $\mu|_{\cT_k}$; moreover, $\rho_k$ is another independent point of the same distribution, by \eqref{def: rho}. We can then apply the statements for the case $k=1$ to $\tilde\chi(a_k, \rho_k; \cT_k, \cM_k)$ and find that (i) $\bE[N^{a_k, \rho_k}]<\infty$; (ii) conditional on their masses, $C^{_{a_k, \rho_k}}_m, 1\le m\le N^{a_k, \rho_k}$, are independent and distributed as rescaled Brownian CRT; (iii) for each $1\le m\le N^{a_k, \rho_k}$, 
$a^{_{a_k, \rho_k}}_{\,m}$ is a $\mu$-random point restricted to $C^{_{a_k, \rho_k}}_m$ and $p^{_{a_k, \rho_k}}_{\,m}$ is either its root or another independent point. Combined with \eqref{def: chi''} and the induction hypothesis, this leads to the statements for $k$. 
\end{proof}

\begin{proof}[Proof of Lemma \ref{lem: gamma_k}]
It follows from Lemma \ref{lem: chi} that the number of  summands in \eqref{def: gamma} is finite, and each summand is bounded by the diameter of $\cT$. This then yields $\gamma_k(i, j)<\infty$, a.s.
\end{proof}

\subsection{Convergence of $(\Gamma_k)_{k\ge 1}$}
This subsection is devoted to proving the almost sure convergence of the sequence $\{\Gamma_k=(\gamma_k(i, j))_{1\le i, j<\infty}: k\ge 1\}$ in the product topology. Note that since $(\eta_i)_{i\ge 1}$ is an i.i.d.~sequence, it suffices to prove the convergence for the sequence 
$\{\gamma_k(1, 2): k\ge 1\}$. 

\subsubsection{A Markov chain representation of $(\gamma_k(1, 2))_{k\ge 1}$. }

Let 
$\cS^\downarrow=\{\mathbf x=(x_1, x_2, \dots): x_1\ge x_2\ge \cdots \ge 0\text{ and }\sum_{i\ge 1}x_i\le 1\}$. For $\mathbf x\in \cS^\downarrow$, its $\ell_1$-norm is $\|\mathbf x\|_1=\sum_i x_i$. 
Let $\cS_f^\downarrow$ be the subset of $\cS^\downarrow$ which consists of those $\bx\in\cS^\downarrow$ for which there exists some $n\in \bbN$ such that $x_i=0$ for all $i\ge n$.  

For each $k\ge 1$, recall the collection $\chi^{1, 2}_k$ from \eqref{def: chi'} and recall that $N_k:=N^{_{1, 2}}_k=|\chi^{_{1, 2}}_k|$ is finite $\bP$-a.s.~according to Lemma \ref{lem: chi}. 
Then let $\bm_k=(\mathrm m_{k, n}: n\ge 1)$ be the sequence obtained from $\{\mu(C^{_{1, 2}}_{k, m}): 1\le m\le N^{i, i'}_k\}$ by a re-ordering in decreasing order and completed with infinitely many $0$. Observe that $\bm_k\in \cS_f^\downarrow$ since $m_{k, n}=0$ for all $n>N_k$. 

\begin{prop}\label{prop: rep_gam}
For $k\ge 1$, let $\bm_k=(\mathrm m_{k, n})_{n\ge 1}$ be defined as above. 
Under $\bP$, the sequence $(\bm_k)_{k\ge 1}$ is a Markov chain taking values in $\cS_f^\downarrow$ which evolves in the following way: for each $k\ge 1$, 
\begin{compactenum}[--]
    \item with probability $1-\|\bm_k\|_1$\,, $\bm_{k+1}=\bm_k$\,, and 
    \item for $1\le n\le N_k$\,, with probability $\mathrm m_{k, n}$\,, $\bm_{k+1}$ is obtained by 
    replacing in $\bm_k$ the element $m_{k, n}$ by $m_{k, n}\cdot\widetilde \bm$, 
    where $\widetilde \bm$ is an independent copy of $\bm_1$\,, and then sorting the sequence thus obtained  in decreasing order.
\end{compactenum}
Moreover for each $k\ge 1$, there exists a sequence of positive real numbers $(\rR_{k, n})_{1\le n\le N_k}$ such that 
\begin{equation}\label{eq: gam}
\gamma_{k}(1, 2)=\sum_{1\le n\le N_k} \sqrt{m_{k, n}}\, \rR_{k, n}\,.
\end{equation}
Under $\bP$ and given that $N_k=p\in \mathbb N$,  $(\rR^{k}_n)_{1\le n\le N_k}$ consists of $p$ independent Rayleigh random variables 
which are independent of $\bm_k$\,. 
\end{prop}

\begin{proof}
By the definition \eqref{def: chi''} of $\chi^{_{1, 2}}_k$, $\bm_{k+1}=\bm_k$ iff $V_{k+1}\in \{C^{_{1, 2}}_{k,m}: 1\le m\le N_k\}$. 
We can readily check by an induction on $k$ that $\{C^{_{1, 2}}_{k, m}: 1\le m\le N_k\}$ is a sub-collection of $\{C_{1, i}: 1\le i\le N_k \}$, the closures of the connected components of $\cT\setminus\Span(\cT, \mathbf V_k)$. On the other hand, $V_{k+1}$ is a point independent of $\mathbf V_k$ with the law $\mu|_{\cT}$. Thus, under $\bP$, the event  $V_{k+1}\in \{C^{_{1, 2}}_{k,m}: 1\le m\le N_k\}$ takes place with probability $1-\sum_{1\le m\le N_k} \mu(C^{_{1, 2}}_{k,m})=1-\|\bm_k\|_1$. Next, suppose that $V_{k+1}\in C^{_{1, 2}}_{k, m_{k}}$ for some index $1\le m_{k}\le N_k$, which takes place with probability $\mu(C^{_{1, 2}}_{k, m_{k}})$. In that case, we have $\cT_{k+1}=C^{_{1, 2}}_{k, m_{k}}$. We have seen in Lemma \ref{lem: chi} that $\cT_{k+1}=C^{_{1, 2}}_{k, m_{k}}$ is a rescaled Brownian CRT and that the points $a^{_{1, 2}}_{k, m_{k}}, \rho_{k+1}$ are independent and distributed according to $\mu|_{\cT_{k+1}}$. We then deduce from \eqref{def: chi''} the distribution of $\bm_{k+1}$ in this case. 
In this way, we have checked the transition probabilities of $(\bm_k)_{k\ge 1}$. The expression \eqref{eq: gam} is a direct consequence of \eqref{def: gamma} and the statements (a-c) in Lemma \ref{lem: chi}. 
\end{proof}

\subsubsection{Polynomial decay of a self-similar fragmentation chain}
\label{sec:decay_frag_chain}

The dynamic of $(\bm_k)_{k\ge 1}$ as described in Proposition \ref{prop: rep_gam} is that of a discrete-time self-similar fragmentation chain with index of self-similarity $1$. Self-similar fragmentation chains are studied in Bertoin \cite[Chapter 1]{bertoinfrag} and a series of papers including Bertoin and Gnedin \cite{BeGn2004}. Here, we apply their results on  the asymptotic behavior of fragmentation chains in order to obtain the following. 
\begin{lem}\label{lem: shuff_1}
Let $(\bm_k)_{k\ge 1}$ be as in Proposition \ref{prop: rep_gam}. 
There exists some $\alpha\in (0, 1)$ such that 
$$
\lim_{k\to\infty} k^\alpha \|\bm_k\|_1=0, \quad \bP\text{-almost surely .}
$$
\end{lem}

The proof of Lemma \ref{lem: shuff_1} will occupy the rest of this part. Note that the Chapter 1 of \cite{bertoinfrag} studies the continuous-time versions of self-similar fragmentation chain, which can be related to the discrete-time versions by a time-change. Let us first recall some terminology from there. 

\paragraph{Continuous-time fragmentation chain. } 
Let $\varpi$ denote the law of $\bm_1$ under $\bP$.
We consider a self-similar fragmentation chain $\{\tilde{\mathbf Z}(t)=(\tilde Z_i(t)_{i\ge 1}: t\ge 0\}$ with index of 
self-similarity $1$ and dislocation measure $\varpi$ starting from the initial state 
$(1, 0, 0, \cdots)$ as defined in \cite[][Definition 1.1]{bertoinfrag}, which is a continuous-time Markov chain taking values in $\cS^\downarrow$ whose total jump rate at time $t$ is $\|\tilde{\mathbf Z}(t)\|_1\in [0, 1]$. Using standard facts about Poisson processes and the construction of fragmentation chains in \cite{bertoinfrag}, we can construct on some probability space $(\Omega, \cF, \bbP)$ the following processes:
\begin{compactitem}
\item
a Poisson process  of rate $1$ which jumps at times $\tau_1<\tau_2<\cdots<\tau_i<\cdots$;
\item
a process $({\mathbf Z}(t))_{t\ge 0}$ having the same distribution as $(\tilde{\mathbf Z}(t))_{t\ge 0}$ such that the set of discontinuities of $t\mapsto \tilde{\mathbf Z}(t)$ is a subset of $(\tau_i: i\ge 1)$.
\end{compactitem}
Then by Proposition \ref{prop: rep_gam}, we have
\begin{equation}\label{id: markov-chain}
({\mathbf Z}(\tau_k))_{k\ge 1} \text{ under }\bbP \, \eqd (\bm_k)_{k\ge 1} \text{ under }\bP\,.
\end{equation}
Note that, in particular, we have ${\mathbf Z}(\tau_1)\eqd \bm_1=(m_{1, n})_{n\ge 1}$. 

Let $p^\ast$ be the \emph{Malthusian exponent} associated with $\varpi$, namely, $p^\ast\in [0, 1]$ is such 
that \[\bE\bigg[\sum_{1\le n\le N_1} \mathrm m_{1, n}^{p^\ast}\bigg]=1\] (see \cite[Section 1.2.2]{bertoinfrag}). 
The following is a consequence of Theorem~1 of \cite{BeGn2004}.
\begin{lem}
Let $\{{\mathbf Z}(t)=(Z_i(t)_{i\ge 1}: t\ge 0\}$ be a self-similar fragmentation chain with index of 
self-similarity $1$ and dislocation measure $\varpi$ which is defined on $(\Omega, \cF, \bbP)$ as above. Then $p^\ast\in(0, 1)$ and for any $\delta\in (0, 1)$,
\begin{equation}\label{eq: dl}
\limsup_{t\to\infty}t^{\,\delta(1-p^\ast)}\sum_{i\ge 1}Z_i(t) 
=0 \quad \bbP\text{-almost surely .}
\end{equation}
\end{lem}

\begin{proof}
First, let us show that $p^\ast\in (0, 1)$. Recall $N_1$ is the number of non zero elements of $\bm_1$, which is also equal to $\sI(\eta_1, \eta_2; \cT, \cM_1)+\sI(\eta_2, \eta_1; \cT, \cM_1)-1$ by our previous definitions. Then  Lemma \ref{lem: tau} tells that $N_1<\infty$, $\bP$-a.s. On the other hand, $\bm_1$ is a sub-collection of the $\mu$-masses of the connected components of $\cT\!\setminus\!\Span(\cT, \{V_1\})$. Therefore, we must have $\|\bm_1\|_1<1$, $\bP$-a.s. This shows $p^\ast<1$. To see why $p^\ast>0$, note that $N_1=1$ if and only if $\eta_1$ and $\eta_2$ are found in the same component of $\cT\!\setminus\!\Span(\cT, \{V_1\})$, which occurs with probability strictly smaller than $1$. Then on the event $N_1\ge 2$, we have $\lim_{p\to 0+}\sum_{1\le n\le N_1}\mathrm m_{1,n}^p\ge 2$. This shows $p^\ast>0$. 

We introduce
\begin{equation}
\label{eq: martingale}
Y(t):=\sum_{i\ge 1}Z^{p^\ast}_i(t), 
\end{equation}
which is a strictly positive martingale by the choice of $p^\ast$. Denote by $Y(\infty)\in [0, \infty)$ its almost sure limit. Next, we check that the hypothesis of Theorem~1 in \cite{BeGn2004} are fulfilled:   with the notation there, we see that $\beta^\ast=p^\ast$, $\beta_a\le 0$ (since $\bE[N_1]<\infty$ by Lemma \ref{lem: tau}), $\sigma=\varpi$ is diffuse and the conditions (1) on page 577 all hold for $\varpi$. Then, by the above mentioned theorem, 
for every $k\ge 1$, there exists a constant $C_k\in (0, \infty)$ such that
$$
\sup_{t\ge 0}t^k\cdot \bbE \Bigg[\sum_{i\ge 1}Z^{p^\ast+k}_i(t)\Bigg] \le C_k,
$$
from which it follows immediately that $\sup_{t\ge 0}t^k\cdot \bbE Z^{p^\ast+k}_1(t)\le C_k$.
Now, for any $\delta\in (0, 1)$ and $\ep>0$, by Markov's inequality, we obtain at time $m> 0$,
\begin{equation}\label{eq:sup_markov}
\bbP\big(Z_1(m)\ge \ep m^{-\delta}\big)
\le \frac{ m^k\cdot \bbE [Z^{p^\ast+k}_1(m)]}{\ep^{p^\ast+k}m^{(1-\delta)k-\delta p^\ast}}
\le \frac{C_k}{\ep^{p^\ast+k}m^{(1-\delta)k-\delta p^\ast}}.
\end{equation}
Choosing $k$ large enough so that $k> (1+\delta p^\ast)/(1-\delta)$, one sees that \eqref{eq:sup_markov} 
implies that $\sum_{m\ge 1}\bbP(Z_1(m)\ge \ep m^{-\delta})<\infty$ and 
$\limsup_{m\to\infty}m^\delta Z_1(m) \le \ep$ almost surely, by the Borel--Cantelli lemma.
As $\ep$ was chosen arbitrary, we then obtain 
that $m^\delta Z_1(m)\to 0$ a.s.\ as $m\to\infty$, and $t^{\delta} Z_1(t) \to 0$ a.s.\ as $t\to\infty$ as well by monotonicity.
Now note that for any $t\ge 0$, 
$$
\sum_{i\ge 1}Z_i(t)=
\sum_{i\ge 1}Z^{1-p^\ast}_i(t)\cdot Z^{p^\ast}_i(t)
\le Z^{1-p^\ast}_1(t)\cdot Y(t).
$$
Then, for any $\delta\in (0, 1)$,
\[
\limsup_{t\to\infty}t^{\delta(1-p^\ast)}\sum_{i\ge 1}Z_i(t) 
\le Y(\infty)\cdot \limsup_{t\to\infty} (t^{\delta}Z_1(t))^{1-p^\ast}=0, 
\]
almost surely, since $p^\ast\in (0,1)$. This completes the proof of the lemma.
\end{proof}

\begin{proof}[Proof of Lemma \ref{lem: shuff_1}]
We work on a probability space where the equality in \eqref{id: markov-chain} holds almost surely. 
By the strong law of large numbers, we have $\tau_k/k\to 1$ almost surely as $k\to\infty$. 
Therefore, we obtain that for any $\delta\in (0, 1)$, 
\[
\limsup_{k\to\infty} k^{\,\delta(1-p^\ast)} \|\bm_k\|_1 
= \limsup_{k\to\infty} k^{\,\delta(1-p^\ast)}\sum_{i\ge 1}Z_i(\tau_k)=0, \quad \text{ almost surely,}
\]
by \eqref{eq: dl}. This  proves Lemma~\ref{lem: shuff_1} by taking $\alpha=\delta(1-p^\ast)$.
\end{proof}

\subsubsection{Concentration around the conditional expectations}
\label{sec:concentration_rayleigh}

In this part, we rely on Lemma \ref{lem: shuff_1} and the exponential tail of a Rayleigh random variable to show the following result. 
\begin{lem}\label{lem: shuff_2}
$\bP$-almost surely, $\gamma_k(1,2) - \bE[\gamma_k(1,2)\, | \, \bm_k]\to 0$ as $k\to \infty$.
\end{lem}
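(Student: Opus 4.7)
The plan is to combine the concentration of sums of independent sub-Gaussian random variables with the polynomial decay of $\|\bm_k\|_1$ provided by Lemma \ref{lem: shuff_1}. Write $W_k := \gamma_k(1,2) - \Ec{\gamma_k(1,2) \mid \bm_k}$. By Lemma \ref{lem: rep_gam}, conditionally on $\bm_k$,
$$W_k = \sum_{n=0}^{N_k} \sqrt{m_{k,n}} \,\big(\rR^k_n - \Ec{\rR^k_n}\big),$$
which is a centred sum of independent random variables weighted by $\sqrt{m_{k,n}}$. Since a standard Rayleigh variable $R$ satisfies $\bP(R > t) = e^{-t^2/2}$, it is sub-Gaussian, and so the conditional sum $W_k$ is sub-Gaussian with variance proxy bounded by $C \sum_n m_{k,n} = C \|\bm_k\|_1$ for some universal constant $C$.

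A standard Chernoff estimate then yields, for every $\epsilon > 0$,
$$\bP\big(|W_k| > \epsilon \,\big|\, \bm_k \big) \le 2 \exp\!\Big(-\frac{\epsilon^2}{C' \|\bm_k\|_1}\Big)$$
for some universal $C' > 0$. By Lemma \ref{lem: shuff_1}, there exists $\alpha > 0$ and a random but a.s.\ finite $K$ such that $\|\bm_k\|_1 \le k^{-\alpha}$ for all $k \ge K$. Consequently, on this almost sure event, the conditional probability above is bounded by $2\exp(-\epsilon^2 k^\alpha / C')$ for $k \ge K$, which is summable in $k$. A conditional Borel--Cantelli argument, applied after conditioning on the entire trajectory of the Markov chain $(\bm_j)_{j \ge 1}$, then shows that $|W_k| > \epsilon$ occurs only finitely often, almost surely. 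Applying this with $\epsilon = 1/\ell$ for every integer $\ell \ge 1$ and intersecting the resulting full-measure events gives $W_k \to 0$ almost surely.

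The main technical subtlety I expect lies in the justification of the conditional Borel--Cantelli step, which uses that the Rayleigh variables $(\rR^k_n)_n$ at step $k$ remain iid Rayleigh not only conditionally on $\bm_k$, but conditionally on the full trajectory $(\bm_j)_{j \ge 1}$. This should follow from the inductive construction behind Lemma \ref{lem: rep_gam}: conditioning on later values of the chain exposes only which subtree is selected for splitting at each step and the masses of the resulting pieces, and no metric information inside the subtrees, so the scaling property of the Brownian CRT (Lemma \ref{lem: Fscaling}) continues to guarantee that the $\rR^k_n$ are iid Rayleigh. Once this measurability point is settled, the remainder of the argument is a routine application of sub-Gaussian concentration.
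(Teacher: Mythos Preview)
Your overall strategy---sub-Gaussian concentration of $W_k$ conditionally on $\bm_k$, combined with the polynomial decay of $\|\bm_k\|_1$ from Lemma~\ref{lem: shuff_1}---is exactly the paper's. The only issue is the step you yourself flag: conditioning on the \emph{entire} trajectory $(\bm_j)_{j\ge 1}$ in order to apply Borel--Cantelli. This does not work as stated, because the $\rR^k_n$ are \emph{not} i.i.d.\ Rayleigh given the full trajectory. Indeed, if $\rT_{k,n_0}$ is the piece that gets split at step $k+1$, the recursion in the proof of Lemma~\ref{lem: rep_gam} gives $\sqrt{m_{k,n_0}}\,\rR^k_{n_0}=\sum_{n}\sqrt{\check m_n}\,\check\rR_n$, where the $\check m_n$ are precisely the new masses entering $\bm_{k+1}$. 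Conditioning on $\bm_{k+1}$ therefore fixes the weights $\check m_n/m_{k,n_0}$, and the conditional law of $\rR^k_{n_0}$ becomes that of a fixed-weight sum of several independent Rayleighs, which is not Rayleigh. So the ``measurability point'' you hoped to settle is in fact false.

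The paper sidesteps this by working unconditionally. Set $A_k=\{|W_k|\ge\epsilon\}$, $B_k=A_k\cap\{\|\bm_k\|_1\le k^{-\alpha}\}$ and $C_k=\{\|\bm_k\|_1>k^{-\alpha}\}$, so that $A_k\subset B_k\cup C_k$. Lemma~\ref{lem: shuff_1} gives $\bbP(\limsup_k C_k)=0$, while your concentration bound (which only needs independence given $\bm_k$, and is therefore legitimate) yields
\[
\bbP(B_k)=\bbE\big[\,\bbP(|W_k|\ge\epsilon\mid\bm_k)\,\I{\|\bm_k\|_1\le k^{-\alpha}}\,\big]\le 2\exp\big(-\epsilon^2 k^{\alpha}/C'\big),
\]
which is summable in $k$. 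The \emph{unconditional} Borel--Cantelli lemma then gives $\bbP(\limsup_k B_k)=0$, hence $\bbP(\limsup_k A_k)=0$, and one concludes by letting $\epsilon\downarrow 0$ as you do. In short, replace your conditional Borel--Cantelli step by this $B_k\cup C_k$ decomposition and the proof goes through; no claim about the Rayleigh variables conditional on the future of $(\bm_j)$ is required.
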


Let $\rR$ be a Rayleigh random variable defined on some probability space $(\Omega, \cF, \bbP)$, namely, $\rR$ has density $x e^{-x^2/2}\mathbf{1}_{\{x>0\}}$. 
Then, one readily verifies that $\rR-\Ec{\rR}$ is \emph{sub-Gaussian} in the sense that 
there exists a constant $v$ such that for every $\lambda\in \bbR$, one has 
\[
\log \Ec{e^{\lambda (\rR- \Ec{\rR})}}\le \tfrac{1}{2}\,\lambda^2 v.
\]
(See \cite[][Theorem~2.1, p.\ 25]{BoLuMa2012a}.)
We may thus apply concentration results for sub-Gaussian random variables such as the ones 
presented in Section~2.3 of \cite{BoLuMa2012a}. To that end, we set for each $k\ge 1$, 
\[
\sigma_k:=\gamma_k(1, 2)-\bE[\gamma_k(1, 2)\,| \,\bm_k]=\sum_{1\le n\le N_k}\sqrt{\mathrm m_{k, n}}\, \big(\rR_{k, n}-\bE[\rR_{k, n}]\big),
\]
by \eqref{eq: gam}, where according to Proposition~\ref{prop: rep_gam}, conditional on $\bm_k$, $(\rR_{k, n}, 1\le n\le N_k)$ are  i.i.d.\ copies of $\rR$. 
Therefore, by the above mentioned concentration results, we find 
\begin{equation}\label{eq: exp_tail}
\bP(|\sigma_k|\ge \ep \,|\,\bm_k)\le 2\exp\Big(-\frac{\ep ^2}{2v\|\bm_k\|_1}\Big), \quad \forall\, \ep>0.
\end{equation}
If $(A_k, B_k, C_k)_{k\ge 1}$ are sequence of events satisfying that $A_k\subset B_k\cup C_k$ 
for each $k\ge 1$, then it is elementary that 
$\bP(\limsup_k A_k)\le \bP(\limsup_k B_k)+\bP(\limsup_k C_k)$. Here, we take 
$$
A_k=\{|\sigma_k|\ge \ep\}, \quad 
B_k=A_k\cap \{\|\bm_k\|_1\le k^{-\alpha}\}, \quad 
C_k=\{\|\bm_k\|_1> k^{-\alpha}\}
$$
with the same $\alpha$ as in Lemma~\ref{lem: shuff_1}. Then $\bP(\limsup_k C_k)=0$ by 
Lemma~\ref{lem: shuff_1}. On the other hand, we deduce from \eqref{eq: exp_tail} that
$$
\sum_{k\ge 1}\bP(B_k)= 
\sum_{k\ge 1}\bE\Big[\bP\big(|\sigma_k |\ge \ep \,|\, \bm_k\big)\cdot \boldsymbol{1}_{\{ \|\bm_k\|_1\le k^{-\alpha}\}}\Big]
\le \sum_{k\ge 1}2e^{-\ep^2k^\alpha/(2v)}<\infty,
$$
which entails that $\bP(\limsup_k B_k)=0$ by the Borel--Cantelli lemma. 
Hence, $\bP(\limsup_k A_k)=0$, which means $\limsup_k |\sigma_k|< \ep $ almost surely. 
Since $\ep>0$ was arbitrary, the proof of Lemma~\ref{lem: shuff_2} is now complete.

\subsubsection{A coupling via partial cut trees}
\label{sec: coupling}

Thanks to Lemma~\ref{lem: shuff_2}, the last step to show the convergence of $(\gamma_k(1, 2))_{k\ge 1}$ consists in proving that under~$\bP$, $\bE[\gamma_k(1,2)\, |\, \bm_k]$ converges almost surely as $k\to\infty$. For this, we rely on a coupling of $(\bm_k)_{k\ge 1}$ with a sequence of masses defined on the partial cut trees. 

Let $k\ge 1$. We recall the following notation in Lemma \ref{lem: key}: $\eta, \eta'$ are two independent points of $\cT$ with distribution $\mu|_{\cT}$; the collection $\{\Delta^{_{(i)}}_m: (i, m)\in \cI_k(\eta, \eta')\}$ consists of the subsets of $\cG_k$ (the $k$-partial cut tree of $\bcT$) which intersect the geodesic $\llb\eta,\eta'\rrb_{\cT}$. Denote $N'_k=|\cI_k(\eta, \eta')|$. 
Let $\{(\mathrm m'_{k, n}, \rR'_{k, n}): 1\le n\le N'_k\}$ be the sequence obtained from $\{(\mu(\Delta_m^{_{(i)}}), \rR_{i, m}): (i, m)\in \cI_k(\eta, \eta')\}$ by arranging the first coordinates in decreasing order.  
It follows from Lemma \ref{lem: key} that 
\begin{equation}\label{eq: gam'}
D\!:=d(\eta, \eta')=\sum_{1\le n\le N'_k}\sqrt{\mathrm m'_{k, n}}\, \rR'_{k, n}\,,
\end{equation}
where, given $N'_k=p\in \bbN$, $(\rR'_{k, n}: 1\le n\le N'_k)$ are $p$ independent Rayleigh random variables which are independent of $\bm'_k:=(\mathrm m'_{k, n})_{n\ge 1}$ with $m'_{k, n}=0$ for $n>N'_k$. 
 
\begin{lem}\label{lem: rep_delta}
Under $\bP$, $(\bm'_k)_{k\ge 1}$ has the same distribution as  the Markov chain $(\bm_k)_{k\ge 1}$. 
\end{lem}

\begin{proof}
We first show that $\bm_1\eqd\bm'_1$. Recall from \eqref{def: cM_k} the collection $\cM_1$.  
For $i=1, 2$, recall from \eqref{eq: mk_1} the definition of $\chi^{\eta_i}(\cT, \cM_1)$. Note that it tells that the sequence $(\mu(C^{\eta_i}_j): j\ge 1)$ has the following distribution. Conditional on $(\bcT, V_1)$, $\mu(C^{\eta_i}_1)$ is a random element of $\{\mu(C_{1,m}): m\in \N\}$ chosen by size-biased sampling, that is, for $m\in \N$, 
\[
\bP\big(\mu(C^{\eta_i}_1) =\mu(C_{1, m}) \,|\, \bcT, V_1\big)=\bP(\eta_i\in C_{1, m})=\mu(C_{1, m}),
\]
since $\bP$-a.s, the $\mu$-masses of $C_{1, m}, m\in\N$, are all distinct. More generally for $j\ge 1$, given $d(\rho, p^{\eta_i}_{j-1})$, $\mu(C^{\eta_i}_j)$ is chosen from $\{\mu(C_{1, m}): m\in \N, d(\rho, \Rt(C_{1, m}))>d(\rho, p^{\eta_i}_{j-1})\}$ by size-biased sampling. Moreover, conditional on $\bcT$ and $V_1$, the two sequences $(\mu(C^{\eta_1}_j): j\in \mathbb Z_+)$ and $(\mu(C^{\eta_2}_j): j\in \mathbb Z_+)$ are independent, since $\eta_1$ and $\eta_2$ are independent. 
Note that we also have $\sI(\eta_1, \eta_2; \cT, \cM_1)=\inf\{j\in \N: \exists\, j'\in\N \text{ s.t. }\mu(C^{_{\eta_1}}_j)=\mu(C^{_{\eta_2}}_{j'})\}$, by the fact that $\mu(C_{1, m}), m\ge 1$, are a.s.~distinct. It follows that we can write $\bm_1=F(\bcT, V_1; \eta_1, \eta_2)$ for some measurable function $F$, outside a $\bP$-null set. 
Next, we show that a.s.~$\bm'_1=F(\cG_1, V'_1; \eta, \eta')$. But this is a consequence of Theorem 5.1 in \cite{ABH10}. Indeed, recall from Proposition \ref{prop: cG_k} the spinal decomposition $\D(\cG_1, \{V'_1\})$. 
Denote by $\{\tilde\Delta_{m_j}: j\in \mathbb Z_+\}$ the sequence of those $\Delta^{_{(1)}}_m, m\ge 1$, which satisfy $\Delta^{_{(1)}}_m\cap\llb \eta, V_1\rrb\ne \varnothing$ such that  $h^{_{(1)}}_{m_1}<h^{_{(1)}}_{m_2}<\cdots$. Then the above mentioned theorem identifies the distribution of $\{\tilde\Delta_{m_j}: j\in \mathbb Z_+\}$ as that of $\{\mu(C^{\eta_1}_j): j\in \N\}$. This then entails that $\bm'_1$ can be written as $F(\cG_1, V'_1; \eta, \eta')$ with the same $F$ as before. Then we have $\bm_1\eqd \bm'_1$, since $(\cG_1, V'_1)\eqd (\cT, V_1)$ by Proposition \ref{prop: cG_k}. 

The rest of the proof is very similar to that of Proposition \ref{prop: rep_gam}. The main differences lie in that we use Corollary \ref{cor: 1} and Lemma \ref{lem: rec} instead of Lemma \ref{lem: chi} and \eqref{def: chi''}. We omit the details. 
\end{proof}

As $(\bm'_k)_{k\ge 1}$ has the same distribution as $(\bm_k)_{k\ge 1}$, Lemma~\ref{lem: shuff_1} 
also holds for $(\bm'_k)_{k\ge 1}$. Furthermore, combined with Lemma \ref{lem: rep_delta}, the concentration arguments already used in the 
course of the proof of Lemma~\ref{lem: shuff_2} imply that, a.s.,
$$
D-\bE[D\,|\, \bm_k']
= \sum_{1\le n\le N'_k}\sqrt{\mathrm m'_{k, n}} \cdot\big(\rR'_{k, n}-\bE[\rR'_{k, n}]\big)
\xrightarrow{k\to\infty} 0, \quad \bP\text{-a.s.}~.
$$
This entails that  
\[
\bE[D\,|\,\bm_k']=\bE[\rR'_{1, 1}] \cdot\sum_{1\le n\le N'_k} \sqrt{\mathrm m'_{k, n}}\, , \quad k\ge 1, 
\]
converges almost surely to $D$. Since the sequence $(\bE[\gamma_k(1,2)\,|\, \bm_k])_{k\ge 1}$ has the 
same distribution by Lemma \ref{lem: rep_delta}, it also converges almost surely to some random variable that
has the same distribution as $D$. 
Combined with Lemma \ref{lem: shuff_2}, we have shown the following. 

\begin{prop}\label{lem: shuff_3}
Under $\bP$, there exists a Rayleigh random variable $\gamma_\infty(1, 2)$ such that $\gamma_k(1, 2)\to \gamma_\infty(1, 2)$ almost surely. 
\end{prop}

\subsection{Proof of Theorem \ref{thm: cv_gamma}}

By exchangeability and by Proposition \ref{lem: shuff_3}, for all $i, i'\ge 1$, $i\ne i'$, there exists a Rayleigh random variable $\gamma_\infty(i, j)$ such that $\lim_{k\to\infty}\gamma_k(i, j)=\gamma_\infty(i, j)$ $\bP$-almost surely. This proves the point a) of Theorem \ref{thm: cv_gamma}. For the rest of the statement, let us begin with the proof of \eqref{id: matrix}. 

We start with a distributional identity for $\Gamma_k$. 
Let $(\xi_i)_{i\ge 1}$ be an independent sequence of i.i.d.~points of $\cT$ with common distribution $\mu|_{\cT}$ and set $\xi_0=\rho$. Then Equation (6.7) and Equation (6.1) in \cite{BW} entail that
\begin{equation}\label{id: matrix_k}
\big(d(\eta_i, \eta_j), \gamma_k(i+1, j+1)\big)_{ 0\le i, j<\infty}\text{ under } \bP \;\eqd \;\big(d_{\cG_k}(\xi_i, \xi_j), d(\xi_{i+1}, \xi_{j+1})\big)_{0\le i, j<\infty}  \text{ under } \bP, 
\end{equation}
in the case where $k=1$. However, the arguments in \cite[Section 6]{BW} can be readily adapted to a general proof for $k\ge1$. Therefore, \eqref{id: matrix_k} holds for any $k\ge 1$. On the other hand, for $k\ge 1$, recall from Proposition \ref{prop: cG_k} that conditional on $\mu_{\cG_k}$, the points $V'_i, 1\le i\le k$, of $\cG_k$ are distributed as $k$ independent points with common distribution $\mu_{\cG_k}$. Then,
\begin{equation}\label{id'}
\Big(\big(d_{\cG_k}(\xi_i, \xi_j)\big)_{0\le i, j\le k}\,,  \big(d(\xi_{i}, \xi_{j})\big)_{ i, j\ge 0}\Big)\eqd \Big(d_{\cG_k}(V'_i, V'_j)_{0\le i, j\le k}\,, \big(d(V_{i}, V_{j})\big)_{ i, j\ge 0}\Big),
\end{equation}
Now let $n\ge 1$ and let $G, H: \bbR^{\mathbb Z_+\times\mathbb{Z}_+} \to \bbR$ be two continuous bounded functions with respect to the product topology which are supported on $\bbR^{n\times n}$. 
Set $V'_0$ to be the root of $\cG_k$. We have used the same sequence $(V'_i)_{i\ge 0}$ for different $\cG_k$; this will not cause confusion, since by Proposition \ref{prop: cG_k}, $\Span(\cG_k; \{V'_1, \dots, V'_k\})$ is isometric to $\Span(\cG; \{V'_1, \dots, V'_k\})$. In particular, we have the fact that $d_{\cG_k}(V'_i, V'_j)=\delta(i, j)$, $0\le i, j\le k$, for all $k$. 
We then obtain from the convergence of $(\Gamma_k)_{k\ge 1}$, equations \eqref{id: matrix_k}, \eqref{id'} and this fact  that
\begin{align*}
\bE\Big[G\big((d(\eta_i, \eta_j))_{0\le i, j<\infty}\big)&H\big( (\gamma_\infty(i, j))_{1\le i, j<\infty}\big)\Big]\\
&=\lim_{k\to\infty} \bE\Big[G\big((d(\eta_i, \eta_j))_{0\le i, j<\infty}\big)H\big( (\gamma_k(i, j))_{1\le i, j<\infty}\big)\Big]\\
&= \bE\Big[G\big((\delta(i, j))_{0\le i, j<\infty}\big)H\big( (d(V_i, V_j))_{1\le i, j<\infty}\big)\Big],
\end{align*}
which proves \eqref{id: matrix}, as $n$ is arbitrary. 

Next, 
we follow Aldous \cite{aldcrt3} to construct the measured real tree $\shuff(\cT)$; see also \cite[Section 1.4]{Bert12}. First, we observe that we can construct a family of rooted real trees $(\mathcal R_k, d_{\cH}), k\ge 1$, such that 1) $\mathcal R_1\subseteq\mathcal R_2\subseteq\cdots$ as metric spaces; 2) each $\mathcal R_k$ has exactly $k+1$ leaves which we denote as $\varsigma_0, \varsigma_1, \dots, \varsigma_k$ and a common root $\rho_{\cH}=\varsigma_0$; 3) for each $k\ge 1$, the distance between $\varsigma_i$ and $\varsigma_j$ is given by $\gamma_\infty(i+1, j+1)$, $0\le i, j\le k$. On the other hand, note that \eqref{id: matrix} entail that $(\gamma_\infty(i, j))_{1\le i, j<\infty}$ under $\bP$  has the same distribution as $(d(V_i, V_j))_{1\le i, j<\infty}$. 
Since $\bcT$ satisfied the so-called \emph{leaf-tight property}, we deduce this also holds for $(\mathcal R_k)_{k\ge 1}$, namely, $\inf_{j\ge 1}d_{\cH}(\varsigma_0, \varsigma_j)=0$, $\bP$-a.s. 
Note that all this still holds if we have first conditioned on $\bcT$. Then by Theorem 3 in \cite{aldcrt3}, given $\bcT$, $(\mathcal R_k)_{k\ge 1}$ under $\bP$ allows for a representation as a measured real tree, which we denote as $\shuff(\cT)=(\cH, d_{\cH}, \mu_{\cH}, \rho_{\cH})$. Moreover, if $(\zeta_i)_{i\ge 1}$ is a sequence of i.i.d.~points of $\cH$ with common distribution $\mu_{\cH}$ and $\zeta_0=\rho_\cH$, then $(d_{\cH}(\zeta_i, \zeta_j))_{0\le i, j<\infty}$ has the same distribution as $(d_{\cH}(\varsigma_i, \varsigma_j))_{0\le i, j<\infty}$. For this reason, we can then view $(\varsigma_i)_{i\ge 1}$ as an i.i.d.~sequence of common law $\mu_{\cH}$. As $\Gamma_\infty$ characterizes $\shuff(\cT)$, also remark that the root of $\cT$ is a $\mu$-random point conditional on $\cut(\cT)$, then \eqref{id: matrix} entails that $(\bcT, \shuff(\cT))\eqd (\cut(\cT), \bcT)$.  
The proof of Theorem \ref{thm: cv_gamma} is now complete.

%%%%%%%%%%%%%%%%%%%%%%%%%%%%%%%%%%%%%%%%%%%%%%%%%%%%%%%

%\newpage

{\small
\setlength{\bibsep}{.2em}
\bibliographystyle{abbrvnat}
\bibliography{refs}
}

%%%%%%%%%%%%%%%%%%%%%%%%%%%%%%%%%%%%%%%%%%%%%%%%%%%%%%%
\end{document}